\documentclass[11pt]{amsart}

\usepackage{amsmath, amsthm, amssymb}
\usepackage{amsfonts}
\usepackage{mathtools}

\usepackage[cal=boondox]{mathalfa}
\usepackage{bbm}
\usepackage{mathrsfs}
\usepackage{stmaryrd}
\usepackage{esint}
\usepackage[T1]{fontenc}
\usepackage{lmodern}

\usepackage{graphicx}
\usepackage{tikz}
\usepackage{tikz-cd}
\usetikzlibrary{arrows}
\usepackage[all,cmtip]{xy}
\usepackage{amscd}
\usepackage{picinpar}

\usepackage{enumerate}
\usepackage{enumitem}
\usepackage{comment}
\usepackage{array,booktabs,longtable,ragged2e}
\usepackage{extarrows}
\usepackage{verbatim}
\usepackage{calc}
\usepackage{xcolor}

\usepackage{hyperref}

\numberwithin{equation}{section}

\theoremstyle{plain}
\newtheorem{thm}{Theorem}
\numberwithin{thm}{subsection}

\newtheorem{lem}[thm]{Lemma}
\newtheorem{prop}[thm]{Proposition}

\newtheorem{mainthm}{Theorem}

\theoremstyle{definition}

\theoremstyle{remark}
\newtheorem{rem}[thm]{Remark}

\newcolumntype{L}[1]{>{\RaggedRight\arraybackslash}p{#1}}
\DeclareMathOperator{\GL}{GL}

\DeclareMathOperator{\SO}{SO}

\DeclareMathOperator{\PU}{PU}
\DeclareMathOperator{\PGL}{PGL}
\DeclareMathOperator{\PSL}{PSL}

\DeclareMathOperator{\Bir}{Bir}

\DeclareMathOperator{\Aut}{Aut}

\DeclareMathOperator{\coh}{H}

\DeclareMathOperator{\Isom}{Isom}

\DeclareMathOperator{\Jonq}{Jonq}
\DeclareMathOperator{\Steinberg}{St}
\DeclareMathOperator{\Supp}{Supp}

\def\bC{{\mathbb{C}}}

\def\bR{{\mathbb{R}}}

\def\bF{{\mathbb{F}}}
\def\bH{{\mathbb{H}}}

\def\bZ{{\mathbb{Z}}}

\def\bP{{\mathbb{P}}}
\def\bN{{\mathbb{N}}}
\def\bQ{{\mathbb{Q}}}

\def\sC{{\mathscr{C}}}

\def\sF{{\mathscr{F}}}
\def\sG{{\mathscr{G}}}
\def\sH{{\mathscr{H}}}

\def\sK{{\mathscr{K}}}
\def\sL{{\mathscr{L}}}

\def\sO{{\mathscr{O}}}
\def\sP{{\mathscr{P}}}

\def\sU{{\mathscr{U}}}

\def\sW{{\mathscr{W}}}

\def\fJ{{\mathfrak{J}}}
\def\fT{{\mathfrak{T}}}

\def\aE{{\mathcal{E}}}
\def\aO{{\mathcal{O}}}

\newcommand{\Induce}[2]{\operatorname{Ind}_{#1}^{#2}}
\newcommand{\Restrict}[2]{\operatorname{Res}_{#1}^{#2}}

\begin{document}

\title[Cremona rigidity]{On cube and Cremona rigidity for higher-rank lattices}

\author{Shengyuan Zhao}
\address{Universit\'e Paul Sabatier, Institut de Math\'ematiques de Toulouse, 118, route de Narbonne, F-31062 Toulouse, France}
\email{shengyuan.zhao@math.univ-toulouse.fr}
\dedicatory{In memory of Misha Kapovich}

\begin{abstract}
For irreducible lattices in semisimple Lie groups of real rank at least $2$, we prove a cohomological vanishing result implying that any action on a CAT(0) cube complex fixes a vertex whenever every hyperplane stabilizer is solvable. As an application, we prove regularizability for all actions of higher-rank lattices by birational transformations on projective surfaces. We first use superrigidity for actions on infinite-dimensional real hyperbolic spaces to reduce to the de Jonquières group, and then apply our fixed-point theorem to the Jonquières complex. Our proof bypasses the direct use of property FW. 
\end{abstract}

\maketitle

\section{Introduction}

\subsection{Zimmer's Program}
Motivated by rigidity results for irreducible lattices in semisimple Lie
groups of higher rank, especially Margulis superrigidity and Zimmer cocycle superrigidity, Zimmer~\cite{ZimmerProgramICM} formulated conjectures predicting that actions of higher-rank lattices on manifolds are highly constrained. Rigidity problems for non-linear actions of lattices are now known as Zimmer's program. We refer to the recent works \cite{BFH1}, \cite{BFH2}, \cite{CantatXie}, \cite{DSVX}, and to the references therein, for historical accounts and various formulations.

This paper concerns Zimmer's program in algebraic geometry, where one studies actions on projective varieties by automorphisms, or more generally by birational transformations. Over the complex numbers, the automorphism case for smooth projective varieties is a special case of the original Zimmer program for diffeomorphisms of compact manifolds; see \cite{CantatZimmer,CantatZeghib}. The birational case is more intriguing. On the one hand indeterminacy loci make the action on the variety non-set-theoretic and may cause non-compact phenomena. On the other hand, the algebraicity can impose stronger rigidity than in the differentiable setting.

Progress on the birational Zimmer program has been made in \cite{Deserti,CantatTits,CantatXie,CantatCornulier,CornulierCremona,LonjouUrech}. In dimension two, Cantat~\cite[\S~4]{CantatTits} classified all birational actions of lattices with Kazhdan's property~(T). In arbitrary dimension, Cantat--Xie~\cite{CantatXie} proved that higher-rank lattices with Kazhdan's property~(T) and the congruence subgroup property admit no infinite birational action on varieties whose dimension is too small compared with the real rank of the lattice. These two papers are closest in spirit to Zimmer's original differentiable conjectures. By contrast, \cite{CantatCornulier,CornulierCremona,LonjouUrech} deal with lattices having property FW, which is strictly weaker than Kazhdan's property~(T), and their results have a somewhat different flavor, discussed later. 

Most of the literature on Zimmer's program focuses on lattices in simple Lie groups of real rank at least \(2\), or more generally on lattices in semisimple Lie groups whose non-compact simple factors all have real rank at least \(2\).
Such lattices have Kazhdan's property~(T); see \cite{BHV}. Lattices in
semisimple Lie groups with rank-one factors often present additional
difficulties. Nevertheless, the general philosophy that higher-rank lattices should have constrained dynamics does not exclude them; see, for instance, \cite{GhysCircle} for a case of Zimmer-type rigidity in the presence of rank-one factors. Cornulier~\cite{CornulierLattices} proved that many lattices without Kazhdan's property~(T) still have property FW, and conjectured this for all higher-rank lattices. Thus \cite{CantatCornulier,CornulierCremona,LonjouUrech} suggest that many birational rigidity results should extend from lattices with property~(T) to all higher-rank lattices. We confirm this in dimension two, including cases where property FW is not presently known.

\subsection{Regularizability}
We refer to \cite{BeauvilleBook} for background on birational geometry of surfaces, and to \cite{BlancAlgGroup,CantatMilnor,CantatTits} for the
background results on groups of birational transformations of surfaces. For simplicity, we work over the field of complex numbers.

Let $G< \Bir(Y)$ be a group of birational transformations on a projective variety $Y$. We say that $G$ is \emph{regularizable} if there exists a quasi-projective variety $Z$ and a birational map $f:Y\dashrightarrow Z$ such that $fGf^{-1}$ is a group of automorphisms of $Z$. If $Z$ can moreover be chosen projective, then $G$ is \emph{projectively regularizable}. If, in addition, $Z$ can be chosen smooth projective and there exists a finite-index subgroup $G'<G$ such that $fG'f^{-1}$ is contained in $\Aut^0(Z)$, the connected component of identity of $\Aut(Z)$, then $G$ is \emph{virtually isotopically projectively regularizable}. 

If $Z$ is a smooth complex projective variety, then $\Aut^0(Z)$ is a
connected Lie group. Since homomorphisms from higher-rank lattices to
Lie groups are strongly constrained by Margulis superrigidity, the
classification of birational actions of higher-rank lattices naturally splits into two parts. The first is to prove that every infinite birational action is virtually isotopically projectively regularizable. The second is to determine which connected algebraic groups can occur as $\Aut^0(Z)$. The second problem is a classification problem of algebraic varieties and is completely understood in dimension two; see \cite{BlancAlgGroup}. We shall therefore focus on the regularization problem.

Cantat--Cornulier~\cite{CantatCornulier} showed that regularization problems in birational geometry are closely related to property FW. Subsequently, Cornulier~\cite{CornulierCremona} and Lonjou--Urech~\cite{LonjouUrech} proved that every birational action of a group with property FW is regularizable, and is projectively regularizable in dimension two. Cornulier~\cite{CornulierLattices,CornulierFW} also proved that many lattices without Kazhdan's property~(T) have property FW; this includes irreducible lattices in semisimple Lie groups having at least one factor with property~(T). Thus, in dimension two, remaining examples include, for instance, uniform irreducible lattices in $\PSL(2,\bR)^n\times \PGL(2,\bC)^m$ and irreducible lattices in $\PU(1,n)^m$.

\subsection{Main results}\label{subsec:main-results}
Let $G_1,\cdots,G_n$ be non-compact connected center-free simple linear algebraic groups. Let $G=G_1\times \cdots \times G_n$ and assume that the real rank of $G$ is $\ge 2$. Let $\Gamma < G$ be an irreducible lattice.

\begin{mainthm}\label{mainthm:Lattice-regularizable}
Let $S$ be a complex projective surface and let $\rho:\Gamma\to \Bir(S)$ be a homomorphism. Then $\rho(\Gamma)$ is virtually isotopically projectively regularizable. 
\end{mainthm}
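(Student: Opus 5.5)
The plan is to follow the strategy outlined in the abstract: first use the action on the infinite-dimensional hyperbolic space, then reduce to the de Jonquières group, and finally apply the fixed-point theorem for cube complexes.

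\textbf{Step 1: hyperbolic space.} Replace $S$ by a minimal model, or by $\bP^2$ if $S$ is rational. Then $\Bir(S)$ acts by isometries on the infinite-dimensional real hyperbolic space $\bH_{\bar S}$ built from the Picard--Manin space (Cantat). By superrigidity for actions of higher-rank lattices on infinite-dimensional real hyperbolic spaces, any such $\Gamma$-action fixes a point of $\bH_{\bar S}$ or fixes a point on its boundary. I expect this to go via harmonic maps or Monod--Shalom type results, the non-elementary case being excluded since the target is rank one. In particular $\rho(\Gamma)$ contains no loxodromic element.

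By the classification of Cantat and Blanc--Déserti, we then have three cases:
\begin{enumerate}[label=(\alph*)]
\item $\rho(\Gamma)$ is elliptic, i.e.\ it fixes a point inside. Then it has bounded degree, and by Weil's regularization theorem together with the Cantat--Cornulier/Lonjou--Urech arguments it is projectively regularizable on a smooth projective $Z$.
\item $\rho(\Gamma)$ preserves an elliptic fibration, with parabolic Halphen-type twists.
\item $\rho(\Gamma)$ preserves a rational fibration, so after conjugation $\rho(\Gamma)\subset \Jonq=\PGL_2(\bC(x))\rtimes \PGL_2(\bC)$, or the analogous group for a ruled surface over a curve $C$.
\end{enumerate}
Case (b) should be handled directly. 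The group preserving an elliptic fibration is virtually an extension of a subgroup of $\Aut$ of the base by a virtually abelian group of fiberwise translations. Any homomorphism of $\Gamma$ to a virtually abelian group has finite image, since $\Gamma^{ab}$ is finite by Margulis normal subgroup theorem. Hence $\rho(\Gamma)$ is virtually contained in the automorphism group of the base curve, and the kernel is finite, which gives regularizability.

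\textbf{Step 2: de Jonquières.} In case (c), let $\Jonq$ act on the Jonquières cube complex, a CAT(0) cube complex with vertices given by the fibered models. The hyperplane stabilizers there should be, up to finite index, groups of the form $\PGL_2$ fixing a section or a point, extended by base automorphisms. I would check that their intersections with the image are solvable, which presumably holds by combining the fibration structure with Tits alternative arguments; alternatively, one can pass to the images of $\Gamma$ in them. Then the paper's fixed-point theorem, which states that if every hyperplane stabilizer is solvable then $\Gamma$ fixes a vertex, gives a $\rho(\Gamma)$-invariant fibered model. That means $\rho(\Gamma)$ acts by automorphisms of a smooth projective ruled surface, possibly after resolving finitely many base points.

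\textbf{Step 3: finishing.} Once $\rho(\Gamma)\subset \Aut(Z)$ with $Z$ smooth projective, $\Aut(Z)/\Aut^0(Z)$ acts on $\mathrm{NS}(Z)$. In the elliptic case the image of $\rho(\Gamma)$ there is finite, because it has bounded degree and preserves the integral lattice with a fixed class. This gives the finite-index subgroup in $\Aut^0(Z)$.

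The main obstacle is Step 2: verifying the solvability hypothesis on hyperplane stabilizers of the Jonquières complex in a form the fixed-point theorem accepts. The hyperplane stabilizers in full $\Jonq$ are not solvable, since they contain $\PGL_2(\bC)$ pieces. So I would first pass to the subgroup acting trivially on the base; this is possible because the image of $\Gamma$ in $\PGL_2(\bC)$ or $\Aut(C)$ is controlled by Margulis superrigidity. I would then argue that the relevant stabilizers are solvable modulo finite kernels.
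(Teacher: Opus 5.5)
Your overall architecture (hyperbolic space, then reduction to $\Jonq$, then a fixed vertex of the Jonquières complex) matches the paper's, but two load-bearing steps do not work as you state them.

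\textbf{Step~1: non-elementary actions.} You discard non-elementary actions on $\bH^\aE$ ``since the target is rank one''. This is false for exactly the lattices the theorem is about. For an irreducible lattice in $\PSL(2,\bR)\times\PSL(2,\bR)$, project to one factor and let $\PSL(2,\bR)$ act on a totally geodesic $\bH^2\subset\bH^\aE$; this is already a non-elementary action. Monod--Py also construct exotic ones for $\SO(1,m)$. Superrigidity (Monod, with Caprace--Lytchak) only gives Theorem~\ref{mainthm:hyperbolic-lattice-rigidity}: the action factors through a single $G_i$. Excluding it needs Cremona-specific input, which your proposal lacks:
\begin{itemize}
\item property~(T) removes higher-rank factors and $\operatorname{Sp}(m,1)$, $F_4^{(-20)}$;
\item Stolowicz removes $\PU(1,m)$;
\item for $G_i=\SO(1,m)$, the Monod--Py classification gives translation lengths $t\,\ell(g,\bH^m)$. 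Density of the projection of $\Gamma$ in $G_i$ then yields loxodromic $\rho(\gamma)$ with arbitrarily small translation length, contradicting the Blanc--Cantat gap for dynamical degrees.
\end{itemize}
Your elementary cases also have holes. A group fixing a boundary point may contain loxodromics, so ``no loxodromic element'' does not follow. The geodesic-preserving case (virtually cyclic or toric) must be killed by the normal subgroup theorem; it is not covered by your trichotomy. In (b), ``$\Gamma$ virtually embeds in the automorphism group of the base, hence regularizable'' does not follow. One uses instead that automorphism groups of Halphen surfaces are virtually abelian, so the image is finite.

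\textbf{Step~2: solvability of hyperplane stabilizers.} This is where the key idea is missing, and your remedy goes in the wrong direction. You cannot in general pass to a finite-index subgroup acting trivially on the base. By Lemma~\ref{lem:limited-type-lie-factors}, the relevant lattices sit in $\PSL(2,\bR)^n\times\PGL(2,\bC)^m$, and they may act on the base through projection to a factor, with infinite image.

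Conversely, inside $\PGL(2,\bC(x))$ hyperplane stabilizers are not solvable. Take $(x,y)\mapsto(x,y+1)$ and $(x,y)\mapsto(x,y/((x-p)y+1))$. Both are regular near the fiber over $p$ and fix $(p,\infty)$, hence fix the edge of $X_p$ given by blowing up that point. Specializing at $x=p+4$ shows they generate a free group. So nothing forces the stabilizers in $\rho(\Gamma)$ to be solvable there, and your ``modulo finite kernels'' argument has no mechanism.

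The missing observation is that hyperplanes of $\fJ$ are indexed by pairs $(p,e)$ with $e$ an edge of $X_p$. Hence anything stabilizing a hyperplane fixes $p$ under the base action. One then splits on the base map $\Gamma\to\PGL(2,\bC)$:
\begin{itemize}
\item If its image is finite, a finite-index subgroup lands in $\PGL(2,\bC(x))$. Non-archimedean superrigidity makes it bounded, hence virtually conjugate into $\PGL(2,\bC)$, and no cube complex is needed. The same branch handles ruled surfaces over non-rational curves.
\item If its image is infinite, the normal subgroup theorem makes the kernel finite, hence trivial for torsion-free $\Gamma$. Each hyperplane stabilizer then embeds in the stabilizer of a point of $\bP^1$ in $\PGL(2,\bC)$, an affine and so solvable group. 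Now Theorem~\ref{mainthm:cube-complex} applies.
\end{itemize}

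\textbf{Step~3.} Your argument only gives the $\Aut^0$ conclusion in the elliptic case. After obtaining the fixed vertex, you still need that $\Jonq$ contains no loxodromic elements or Halphen twists to get virtual isotopy.
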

As explained above, Theorem~\ref{mainthm:Lattice-regularizable}, together
with Margulis superrigidity and the classification~\cite{BlancAlgGroup}
of actions of connected algebraic groups on projective surfaces, gives in
principle a list of all possible birational actions of higher-rank lattices on projective surfaces, up to birational conjugacy. We shall not give this list explicitly. The only case requiring (arguably) a separate discussion concerns actions of lattices in $\PSL(2,\bR)^n\times \PGL(2,\bC)^m$ on Hirzebruch surfaces $\sF_k, k>1$; this case is described briefly in \S\ref{sec:hirzebruch}. The following is a short statement in the spirit of Zimmer's original conjectures: 
\begin{mainthm}\label{mainthm:Zimmer-no-action}
Let $S$ be a complex projective surface. Assume that none of $G_1,\cdots,G_n$ is $\PSL(2,\bR)$, $\PGL(2,\bC)$, $\PU(1,2)$, $\PSL(3,\bR)$ or $\PGL(3,\bC)$. Then there is no homomorphism with infinite image from $\Gamma$ to $\Bir(S)$.
\end{mainthm}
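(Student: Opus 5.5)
Theorem~\ref{mainthm:Zimmer-no-action} follows from Theorem~\ref{mainthm:Lattice-regularizable}, Margulis superrigidity and Blanc's classification of connected algebraic groups acting on surfaces. Let $\rho:\Gamma\to\Bir(S)$ be a homomorphism. By Theorem~\ref{mainthm:Lattice-regularizable} there is a smooth projective surface $Z$, a birational map $f:S\dashrightarrow Z$ and a finite-index subgroup $\Gamma'<\Gamma$ with $f\rho(\Gamma')f^{-1}\subset\Aut^0(Z)$. It suffices to show that $\rho(\Gamma')$ is finite. Then $\rho(\Gamma)$ is finite too, since $\rho(\Gamma')$ has finite index in it.

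First, $\Aut^0(Z)$ is a connected algebraic group. Write $H$ for the Zariski closure of $f\rho(\Gamma')f^{-1}$, and $H^0$ for its identity component. Replacing $\Gamma'$ by a further finite-index subgroup, which is still an irreducible lattice in $G$, we may assume the image is Zariski dense in the connected group $H^0$. By Chevalley, $H^0$ is an extension of an abelian variety by a linear algebraic group $L$. Since $\Gamma'$ has real rank $\ge2$, its abelianization is finite by Margulis' normal subgroup theorem. Hence the image in the abelian variety is finite, and by Zariski density and connectedness it is trivial. So the image lies in $L$. Passing to $L/R_u(L)$ and its radical quotient in the same way, using that solvable quotients of $\Gamma'$ are finite, we obtain a Zariski-dense homomorphism of $\Gamma'$ into a connected semisimple group $M$. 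Moreover, $\Gamma'$ has finite image in $L$ if and only if this homomorphism to $M$ has finite image: the kernel of $L\to M$ is solvable, and a solvable group virtually trivially receiving a lattice image means the restricted map has finite image. This uses normal subgroup theorem arguments again, applied to the finite-index preimage.

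Next, suppose $\rho(\Gamma')$ is infinite. By Margulis superrigidity, the map $\Gamma'\to M$ virtually extends to a continuous homomorphism $G\to M$ (over $\bR$, viewing the complex group $M$ as a real group). Its image is nontrivial and factors through some simple factor $G_i$. So some $G_i$ embeds, up to isogeny, into a simple factor of $M$ as a real Lie group. By Blanc's classification~\cite{BlancAlgGroup}, the connected algebraic groups acting faithfully on rational surfaces have semisimple parts that are products of at most two copies of $\PGL(2,\bC)$, or $\PGL(3,\bC)$ (from $\bP^2$), or $\PGL(2,\bC)$ (from Hirzebruch or ruled surfaces). For non-rational surfaces, the linear part of $\Aut^0$ is contained in $\PGL(2,\bC)$ acting on ruled surfaces, or is trivial. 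Thus the simple factors of $M$ are $\PGL(2,\bC)$ or $\PGL(3,\bC)$. The non-compact simple real groups admitting a nontrivial continuous homomorphism into one of these, with Zariski-dense image given the lattice constraint, are $\PSL(2,\bR)$, $\PGL(2,\bC)$, $\PU(1,2)$, $\PSL(3,\bR)$ and $\PGL(3,\bC)$. Here $\PU(2,1)$ is the real form of $\PGL(3,\bC)$ with non-compact type, and $\SO(2,1)\cong\PSL(2,\bR)$ embedding in $\PGL(3,\bC)$ is already listed. Compact forms are excluded because each $G_i$ is non-compact. This contradicts the hypothesis, so $\rho(\Gamma')$ is finite.

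The step needing most care is the reduction to a semisimple target and the enumeration of real forms. In particular, we must check that the normal subgroup theorem gives finite image in the non-semisimple parts. We also need the list of non-compact simple real Lie algebras embedding in $\mathfrak{sl}_2(\bC)$ or $\mathfrak{sl}_3(\bC)$: these are $\mathfrak{sl}_2(\bR)$, $\mathfrak{sl}_2(\bC)$, $\mathfrak{su}(1,2)$, $\mathfrak{sl}_3(\bR)$ and $\mathfrak{sl}_3(\bC)$, which match the excluded list exactly.
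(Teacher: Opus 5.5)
Your argument is correct in substance, and you may legitimately use Theorem~\ref{mainthm:Lattice-regularizable}, since its proof in the paper does not depend on the present statement. It is, however, a genuinely different route from the paper's.

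\textbf{The paper's route.} The paper never invokes Theorem~\ref{mainthm:Lattice-regularizable} here. It starts from the dichotomy of Lemma~\ref{lem:rigidity-hyperbolic-cremona}. It disposes of the Jonqui\`eres alternative with Lemma~\ref{lem:limited-type-lie-factors}, which applies Margulis superrigidity to the base action $\Gamma\to\PGL(2,\bC)$, or, if that is finite, to the fibrewise action $\Gamma\to\PGL(2,\bC(x))$. In the regularizable alternative it does a geometric case analysis of $\Aut^0(S)$. Non-negative Kodaira dimension gives a torus. Ruled and Hirzebruch-type surfaces preserve a ruling and fall back into the fibred case. This leaves only $\PGL(3,\bC)$ acting on $\bP^2$.

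\textbf{What each route buys.} The paper's route is economical in its inputs. The statement then depends only on Theorem~\ref{mainthm:hyperbolic-lattice-rigidity} and classical superrigidity. It does not need Borel--Serre duality, the Steinberg module computation, or the Jonqui\`eres cube complex (Theorems~\ref{mainthm:cohomology-vanishing} and~\ref{mainthm:cube-complex}). In the Jonqui\`eres case no regularization is needed at all, only a homomorphism to $\PGL(2,\bC)$ with infinite image.

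Your route is the one sketched in the introduction: Theorem~\ref{mainthm:Lattice-regularizable}, then superrigidity, then Blanc's classification. It treats fibred and non-fibred actions uniformly inside the connected algebraic group $\Aut^0(Z)$, via Chevalley's theorem and the radical quotient. It also makes transparent why the excluded list is exactly the non-compact real forms of types $A_1$ and $A_2$. The price is that it rests on the heaviest theorem of the paper.

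\textbf{A step to state more carefully.} The paper elides this point in the same way in Lemma~\ref{lem:limited-type-lie-factors}. Superrigidity extends $\Gamma'\to M$ to a continuous homomorphism of $G$ only when the image is not relatively compact. Infinite Zariski-dense images inside a compact form do occur for suitable arithmetic lattices, and these do not extend. Examples are images in $\SO(3)\subset\PGL(2,\bC)$ for lattices in products of copies of $\PSL(2,\bR)$, and images in $\PU(3)\subset\PGL(3,\bC)$ for lattices in products of copies of $\PU(1,2)$.

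In that case you need the arithmetic form of superrigidity: such a representation virtually comes from a rational representation of the $\bQ$-group defining $\Gamma$, taken over $\bC$, namely from a Galois conjugate at a compact place. This shows that the simple factors of $M$ have the same absolute type as the $G_i$, and it yields the same excluded list.
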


The proof of Theorem~\ref{mainthm:Lattice-regularizable} uses the following two results of independent interest, proved using Borel--Serre duality and Steinberg modules.
\begin{mainthm}\label{mainthm:cohomology-vanishing}
There exists a finite-index torsion-free subgroup $\Gamma'<\Gamma$ such that, for any solvable subgroup $K\subset \Gamma'$, for any field $\bF$,  
\[
\coh^1(\Gamma',\bF[\Gamma'/K])=0.
\]
\end{mainthm}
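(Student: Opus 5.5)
Following the hint that the tools are Borel--Serre duality and Steinberg modules, the proof will run as follows. First choose $\Gamma'<\Gamma$ torsion-free of finite index. If $\Gamma$ is arithmetic, take $\Gamma'$ neat inside a $\bQ$-form $\mathbf{H}$ with $\bQ$-rank $r$. By Borel--Serre, $\Gamma'$ is then a duality group of dimension $d=\dim X-r$, where $X$ is the symmetric space of $G$, with dualizing module the Steinberg module $\Steinberg=\tilde{\coh}_{r-1}(\fT)$, the reduced homology of the Tits building of $\mathbf{H}(\bQ)$. If $\Gamma$ is cocompact ($r=0$), $\Gamma'$ is a Poincaré duality group of dimension $\dim X$ with trivial (orientation) dualizing module. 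Margulis arithmeticity covers all cases because the rank is $\ge 2$. Duality then gives, for any $\Gamma'$-module $M$,
\[
\coh^1(\Gamma',M)\cong \coh_{d-1}(\Gamma',\Steinberg\otimes M).
\]
For $M=\bF[\Gamma'/K]$, Shapiro's lemma turns this into $\coh_{d-1}(K,\Steinberg\otimes\bF)$, and the task becomes showing that this homology of the solvable group $K$ vanishes.

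\textbf{Cocompact case.} Here the group is $\coh_{d-1}(K,\bF)$ with $d=\dim X$. A solvable subgroup $K$ of the lattice is virtually polycyclic, indeed virtually $\bZ^k$ with $k\le r_\bR:=\mathrm{rank}_\bR G$, since it acts cocompactly on a flat. So $K$ has cohomological dimension at most $\mathrm{rank}_\bR G$. It therefore suffices that $\mathrm{rank}_\bR G< \dim X-1$. This holds once $\dim X\ge 3$, because $\dim X$ is strictly larger than the rank for a non-flat space: in rank $\ge2$ one has $\dim X\ge r_\bR+2$, e.g.\ $\bH^2\times\bH^2$ has $\dim 4$, rank $2$. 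Equality $\dim X=r_\bR+1$ is impossible with $\ge2$ rank. I will verify that $\dim X\ge r_\bR+2$ in all cases.

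\textbf{Non-uniform case.} Now $\coh_{d-1}(K,\Steinberg)$ involves the Steinberg module, which is free abelian but not finitely generated, with $\Steinberg\otimes\bF$ of infinite dimension. I would use Solomon--Tits: $\Steinberg$ is spanned by apartment classes. Restricted to a subgroup, and in particular to any subgroup of $\mathbf{H}(\bQ)$, it is the top homology of the building complex. Then $\coh_*(K,\Steinberg)$ can be computed by the spectral sequence of $K$ acting on the (spherical, $(r-1)$-dimensional) Tits building, with coefficients shifted: $\coh_{j}(K,\Steinberg)$ relates to equivariant homology $\coh^K_{j+r-1}$ of the building pair (cone, building). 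This reduces everything to homology of stabilizers $K\cap P$ of rational parabolics, which are solvable, in degrees around $d-1-(r-1)-\text{simplex dim}$. One then needs the bound $\mathrm{vcd}(K\cap P)< d-1-(r-1)+\dots$, i.e.\ that solvable subgroups have vcd at most roughly $\mathrm{rank}_\bR G+(\text{unipotent part})$. This is compared against $d=\dim X-r$ using the Hirsch length bound for solvable subgroups of $\Gamma'$ (Hirsch length $\le \dim$ of a maximal $\bR$-split solvable subgroup of $G$ intersected appropriately). A cleaner alternative that I would try first is to note $\coh^1(\Gamma',\bF[\Gamma'/K])=0$ iff there is no nontrivial almost invariant $K$-set (one end relative to $K$), and to deduce it from $\coh^1(\Gamma',\bF\Gamma')=0$ via Borel--Serre ($\coh^i(\Gamma',\bF\Gamma')=0$ for $i\ne d$, and $d\ge2$) combined with a Kropholler-type argument for $K$ solvable.

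The main obstacle is this non-uniform degree count: showing that $\mathrm{hirsch}(K\cap P)$ is small relative to $\dim X-r-1$ uniformly, including the worst cases like $\SO$ or $\PSL(2,\bR)^2$-type $\bQ$-rank-one lattices, where the unipotent radical has large dimension. I expect to need the precise inequality $\mathrm{hirsch}(K)\le \dim X - r -2$ for solvable $K$. I would prove it by embedding $K$, up to finite index, in a $\bQ$-parabolic: its unipotent part lies in $U_P(\bZ)$ and its semisimple part in a flat of the Levi factor, and I would compare dimensions with the Langlands decomposition $X\cong N_P\times A_P\times X_{M}$.
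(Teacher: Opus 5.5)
The reduction is the same as the paper's: Borel--Serre or Poincar\'e duality followed by Shapiro's lemma, so that you must show $\coh_{d-1}(K,\Steinberg)=0$ with $d=\dim X-q$, where $q$ is the $\bQ$-rank (your $r$). Your uniform case is also the paper's argument. The gap is the non-uniform case. Pure degree counting along the Solomon--Tits resolution cannot work there, and the inequality $\mathrm{hirsch}(K)\le\dim X-q-2$ you intend to prove is false.

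In total degree $d-1$ the $E^1$-terms are $\coh_{\dim X-2-l}(K\cap\sP_\sigma,\bF)$ for $l$-simplices $\sigma$, together with $\coh_{\dim X-1}(K,\bF)$. Take $\Gamma'$ torsion-free of finite index in $\mathrm{SL}(3,\bZ)$, and let $K$ be its intersection with the upper unitriangular group; this is a Heisenberg lattice.
\begin{itemize}
\item $K$ lies in the Borel subgroup, and $\mathrm{cd}(K)=3=d$, while $\dim X-q-2=1$.
\item The chamber term $\coh_2(K,\bF)$ and the vertex terms $\coh_3(K,\bF)\cong\bF$ are all nonzero.
\item Similarly, in a Hilbert modular group (your $\PSL(2,\bR)^2$, $\bQ$-rank one example), a cusp group is a Sol-lattice with $\mathrm{cd}=3=d$ and $\coh_2\neq 0$.
\end{itemize}
So solvable subgroups can have cohomological dimension equal to $\mathrm{vcd}(\Gamma')$. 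In such cases $\coh_{d-1}(K,\Steinberg)$ vanishes only through cancellation in the spectral sequence, reflecting that $\Steinberg$ restricted to a unipotent radical is free. No Hirsch-length bound can detect this.

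Your Kropholler-type alternative does not fill the gap either. One-endedness alone never suffices: a closed surface group with $K\cong\bZ$ is one-ended, yet $\coh^1(\Gamma',\bF[\Gamma'/K])\cong\coh_1(K,\bF)\neq 0$. You also do not say where higher rank would enter.

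The missing idea is Reeder's theorem. For a $\bQ$-parabolic $\sP$ with Levi factor $\sL$, one has $\Restrict{\sP(\bQ)}{\sG(\bQ)}\Steinberg(\sG,\bF)\cong\Induce{\sL(\bQ)}{\sP(\bQ)}\Steinberg(\sL,\bF)$. If $K\subset\sP(\bQ)$, Mackey's formula and Shapiro's lemma turn $\coh_i(K,\Steinberg(\sG,\bF))$ into a direct sum of groups $\coh_i\bigl(u^{-1}(K\cap u\sL(\bQ)u^{-1})u,\Steinberg(\sL,\bF)\bigr)$. This discards the unipotent part of $K$ entirely. It allows an induction on semisimple $\bQ$-rank over reductive groups, with the invariant $\delta(\sG)=r_\sG-c_\sG-q_\sG$ unchanged when passing to Levi factors.

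Only in the base case, where $K$ lies in no proper $\bQ$-parabolic, does your degree count succeed. There Borel--Tits forces $\overline{K}^{0}$ to be a torus. Hence $K$ and the groups $K\cap\sP_\sigma$ are virtually free abelian of ranks at most $r_\sG-c_\sG$ and $r_\sG-c_\sG-l-1$ respectively. This gives $\coh_i(K,\Steinberg)\cong\coh_{i+q_\sG}(K,\bF)=0$ for $i>\delta(\sG)$. Since $\delta(\sH)=r_\bR-q<\dim X-q-1$, the theorem follows with a bound that does not depend on the Hirsch length of $K$.
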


\begin{mainthm}\label{mainthm:cube-complex}
If $\Gamma$ acts on a CAT(0) cube complex $\sC$ and if the stabilizer of every hyperplane of $\sC$ is solvable, then $\sC$ has a $\Gamma$-fixed vertex. 
\end{mainthm}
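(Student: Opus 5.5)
We deduce Theorem~\ref{mainthm:cube-complex} from Theorem~\ref{mainthm:cohomology-vanishing} through the standard dictionary between actions on CAT(0) cube complexes and commensurating actions (equivalently, walls and the first cohomology of permutation modules). First we reduce to the finite-index subgroup $\Gamma'$ of Theorem~\ref{mainthm:cohomology-vanishing}. If $\Gamma'$ fixes a vertex of $\sC$, then the $\Gamma$-orbit of that vertex is finite. A group acting on a CAT(0) cube complex with a finite (hence bounded) orbit fixes a point of the cube complex, namely the circumcenter of the orbit. The support cube of that point is then $\Gamma$-invariant. Passing to the cubical subdivision, the barycenter of this cube is a $\Gamma$-fixed vertex. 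Subdividing preserves the hypothesis, since hyperplane stabilizers in the subdivision are contained in stabilizers of hyperplanes of $\sC$ (or of cubes, which permute finitely many hyperplanes), hence remain solvable. So it suffices to treat $\Gamma'$.

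Next, fix a vertex $v$ and consider the set $\mathcal{H}$ of hyperplanes with $\Gamma'$ acting on it. The orbits $\mathcal{H}=\bigsqcup_i \Gamma'\cdot h_i$ correspond to permutation modules $\bF_2[\Gamma'/K_i]$, where $K_i=\mathrm{Stab}_{\Gamma'}(h_i)$ is solvable by hypothesis. Take $\bF=\bF_2$. For each $g$, the set $W(v,gv)$ of hyperplanes separating $v$ and $gv$ is finite, and $g\mapsto \mathbbm{1}_{W(v,gv)}$ is a $1$-cocycle with values in $\bF_2[\mathcal{H}]$, because $W(v,ghv)=W(v,gv)\,\triangle\, gW(v,hv)$. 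Its projection to each orbit gives a cocycle $c_i\in Z^1(\Gamma',\bF_2[\Gamma'/K_i])$. By Theorem~\ref{mainthm:cohomology-vanishing}, each $c_i$ is a coboundary: there is a finite set $F_i\subset\Gamma' h_i$ with $W(v,gv)\cap \Gamma'h_i=F_i\triangle gF_i$.

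The main obstacle is to assemble these into a fixed vertex when there are infinitely many orbits. To handle it, I would note that only finitely many orbits meet $W(v,gv)$ for $g$ in a finite generating set $S$ of $\Gamma'$ (lattices are finitely generated). For every other orbit, $c_i$ vanishes on $S$, hence $c_i=0$, and we take $F_i=\emptyset$. Let $F=\bigcup_i F_i$, which is finite. Let $w$ be the vertex obtained from $v$ by flipping the halfspaces of the hyperplanes in $F$. The consistency of this flip needs justification: one can check that the orientation $\sigma'=\sigma_v\triangle F$ is a consistent ultrafilter differing from $\sigma_v$ on finitely many hyperplanes, hence a vertex. The cleanest route is to view the cocycle identity as saying that the orientation $\sigma_v\triangle F$ is $\Gamma'$-invariant, since $g\sigma_v=\sigma_{gv}=\sigma_v\triangle W(v,gv)$. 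A $\Gamma'$-invariant orientation differing from $\sigma_v$ at finitely many hyperplanes is almost-equal to a vertex. Consistency (no two disjoint chosen halfspaces) can fail only on finitely many pairs, and I would fix this by replacing $F$ by $F\triangle C$ for a $\Gamma'$-invariant finite correction, or alternatively argue via the convex hull of the finite set of vertices $\{\sigma_v\triangle F'\}$. Failing that, the fallback is to note that the cocycle being a coboundary implies that $g\mapsto d(v,gv)=|F\triangle gF|$ is bounded by $2|F|$. Hence orbits are bounded, and the circumcenter argument of the first paragraph yields a fixed vertex directly. I would use this bounded-orbit route as the primary argument since it avoids the consistency check.
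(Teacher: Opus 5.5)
Your argument is essentially the paper's. You use the $\bF_2$-valued cocycle given by the indicator of $W(v,gv)$ and finite generation to see that only finitely many hyperplane orbits carry a nonzero component. You then apply Theorem~\ref{mainthm:cohomology-vanishing} orbit by orbit to write $W(v,gv)=F\triangle gF$, so $d(v,gv)\le 2\#F$, and finish with a bounded-orbit fixed-point theorem (the circumcenter for you, Gerasimov in the paper). You were right to drop the ``flip $F$'' route in favour of bounded orbits.

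\textbf{The gap.} The last step does not give what you claim, and the same issue affects your reduction from $\Gamma$ to $\Gamma'$. The circumcenter is a fixed point in the relative interior of an invariant cube $Q$. The barycenter of $Q$ is then a fixed vertex of the cubical subdivision, not of $\sC$. Your remark that subdividing preserves the hypothesis does not help, since a vertex of the subdivision is in general not a vertex of $\sC$. No bounded-orbit theorem, Gerasimov's included, can do better in general: consider an element flipping a single edge.

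\textbf{The fix.} Use the hypothesis once more. If $\dim Q=k\ge 1$, the group permutes the $k$ hyperplanes crossing $Q$, so each of them has a stabilizer of index at most $k$. But a finite-index subgroup of $\Gamma$ (or of $\Gamma'$) is again a lattice in $G$, hence Zariski dense by Borel density, hence not solvable. So $Q$ is a vertex of $\sC$. The paper's appeal to Gerasimov tacitly needs this same remark.

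\textbf{Infinite dimension.} The application is to the infinite-dimensional Jonqui\`eres complex. There your circumcenter argument needs the $\ell^2$ metric on an infinite-dimensional CAT(0) cube complex to be complete and CAT(0). This is true but nontrivial (Leary), and you do not address it. Working with bounded orbits in the combinatorial metric via Gerasimov's theorem, as the paper does, avoids this issue.
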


Cornulier~\cite[Prop.~7.I.3]{CornulierFW} showed that property FW for $\Gamma$ is equivalent to the vanishing $\coh^1(\Gamma,\bZ[\Gamma/K])=0$ for every subgroup $K$. In particular, for lattices whose property FW is known, Theorem~\ref{mainthm:cohomology-vanishing} and~\ref{mainthm:cube-complex} are merely special consequences of Cornulier's theorems~\cite{CornulierLattices}. However our proof is very different.
The weaker vanishing statement in Theorem~\ref{mainthm:cohomology-vanishing} turns out to be sufficient for the proof of Theorem~\ref{mainthm:Lattice-regularizable}, thanks to the following result which is a consequence of Monod's superrigidity~\cite{MonodRigidity} and a result of Caprace--Lytchak~\cite{CapraceLytchak}:
\begin{mainthm}\label{mainthm:hyperbolic-lattice-rigidity}
Assume that $\rho:\Gamma\to \Isom(\bH^\aE)$ is a non-elementary isometric action on an infinite-dimensional real hyperbolic space $\bH^\aE$. There exists a unique minimal $\rho$-invariant closed hyperbolic subspace $\bH^\rho\subset \bH^\aE$. There exist $i\in \{1,\cdots,n\}$ and a continuous homomorphism $\rho_i:G_i\to \Isom(\bH^\rho)$ such that the restriction of $\rho$ to $\bH^\rho$ factors through the composition of $\rho_i$ with the projection $G\to G_i$.
\end{mainthm}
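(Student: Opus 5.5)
The plan is to combine Caprace--Lytchak for the existence of the minimal invariant subspace with Monod's superrigidity for the factorization through a single simple factor.

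\emph{Step 1: the minimal subspace.} Caprace--Lytchak show the following for a group acting on a complete CAT($-1$) space of this type (in particular on $\bH^\aE$) without a fixed point at infinity and without a bounded orbit. There is a unique minimal nonempty closed convex invariant subset, and for infinite-dimensional hyperbolic spaces one can take it to be a closed totally geodesic subspace $\bH^\rho$. Non-elementarity of $\rho$ is exactly the hypothesis needed. Uniqueness follows from minimality: two distinct minimal closed totally geodesic invariant subspaces $X_1,X_2$ would give, via the nearest-point projection $X_1\to X_2$, a nonempty invariant convex subset of one of them at bounded distance from the other. Hyperbolic geometry (strict negative curvature) then forces $X_1=X_2$, or else produces a bounded orbit or a fixed point at infinity, both of which are excluded.

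\emph{Step 2: superrigidity.} Restrict $\rho$ to $\bH^\rho$, which is a complete CAT($-1$) (hence CAT(0)) space. The action there is minimal (no proper invariant closed convex subset) and has no fixed point at infinity. Monod's superrigidity theorem for irreducible lattices in products applies after two checks. First, $\Gamma$ is square-integrable; it is cocompact or, in the non-uniform higher-rank case, integrable by Shalom's and Margulis' results. Second, the target is a CAT(0) space with an action that is minimal and without fixed point at infinity, and $\bH^\rho$ need not be proper. The theorem then yields a $\Gamma$-invariant closed convex subset on which the action extends continuously to $G$, and by minimality this subset is all of $\bH^\rho$.

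\emph{Step 3: one factor.} When $n\ge2$, Monod also gives a splitting. Up to a further invariant convex subspace, which by minimality is again all of $\bH^\rho$, the target decomposes as a product $\prod_i X_i$ with $G$ acting factorwise. A CAT($-1$) space admits no nontrivial product decomposition, so only one $X_i$ is nontrivial, and the continuous extension factors through $G\to G_i$; call the resulting homomorphism $\rho_i$. When $n=1$, $G$ has real rank at least $2$, so the statement holds with $\rho_i$ the extension itself.

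The main obstacle is verifying Monod's hypotheses in the case $n=1$. There we need either property (T) for $\Gamma$, which gives a fixed point via Delorme--Guichardet and contradicts non-elementarity, or the rank-one-factor version of the extension theorem. Since the case $n=1$ has real rank at least $2$, property (T) is available and handles it. The genuine work is the non-proper, infinite-dimensional setting of Steps 1--2.
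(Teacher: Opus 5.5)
Your Step~1 contains a false claim, and Steps~2 and~3 depend on it. You assert that the minimal closed convex $\Gamma$-invariant subset can be taken to be a closed totally geodesic subspace. Step~2 then uses this twice: the action on $\bH^\rho$ is said to be minimal, and the convex set produced by Monod's theorem is said to be all of $\bH^\rho$ ``by minimality''. Step~3 uses it again. In infinite dimension this is false, which is why the statement is phrased with the minimal invariant \emph{hyperbolic subspace}.

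The exotic representations of Monod--Py give a counterexample. For $G_1=\SO(1,m)$ and $0<t<1$ they give continuous non-elementary actions on $\bH^\infty$ whose smallest invariant closed hyperbolic subspace is infinite-dimensional. Yet these actions preserve a \emph{locally compact} closed convex subset $C$, namely the closed convex hull of an equivariantly embedded $\bH^m$. Precompose with $\Gamma\to G\to G_1$. Since the projection of $\Gamma$ to $G_1$ is dense, $C$ is the minimal $\Gamma$-invariant closed convex set, while $\bH^\rho$ is still the whole infinite-dimensional space, so $C\subsetneq\bH^\rho$.

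Consequently Monod's theorem only gives a continuous $G$-action on $C$, not on $\bH^\rho$. You first need Caprace--Lytchak to get a minimal invariant $C\subseteq\bH^\rho$, and also to upgrade minimality to the \emph{reducedness} Monod actually requires. Your remark that a CAT($-1$) space has no nontrivial product splitting is correct and does make this action factor through a single $G_i$. But the resulting $\rho_i^C$ takes values in $\Isom(C)$, not $\Isom(\bH^\rho)$.

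The missing step is extending from $C$ to $\bH^\rho$; the paper does this linearly in the hyperboloid model.
\begin{itemize}
\item Write $\bH^\rho$ as the hyperboloid of a closed subspace $\aE^\rho$ on which $B$ is Lorentzian. Minimality of $\bH^\rho$ among invariant hyperbolic subspaces means that the linear span of $C$ is dense in $\aE^\rho$.
\item For $f\in\Isom(C)$, set $T^f(\sum_j\lambda_jx_j)=\sum_j\lambda_jf(x_j)$ with $x_j\in C$.
\item Since $B(x,y)=\cosh d(x,y)$ and $f(C)=C$, one gets $B(T^fu,f(y))=B(u,y)$ for all $y\in C$. Nondegeneracy of $B$ on $\aE^\rho$ then shows that $T^f$ is well defined and $B$-preserving.
\item $T^f$ therefore extends to an isometry of $\bH^\rho$, continuously and homomorphically in $f$. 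Setting $\rho_i(g):=T^{\rho_i^C(g)}$ gives the required homomorphism.
\end{itemize}

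The same picture gives a cleaner uniqueness argument than your projection sketch. $C$ is the unique minimal invariant closed convex set. Every invariant closed hyperbolic subspace contains a minimal invariant convex set, hence contains $C$. So $\bH^\rho$ is the hyperbolic hull of $C$; the paper simply cites Burger--Iozzi--Monod, Prop.~4.3 for this.
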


\section{Cohomology vanishing via duality}\label{sec:duality}

\subsection{Steinberg module}\label{subsec:Steinberg}
Let $\sG$ be a reductive algebraic group defined over $\bQ$ of semisimple $\bQ$-rank $q$. The Tits building $\fT(\sG)$ is a $(q-1)$-dimensional simplicial complex whose $k$-dimensional simplices are in bijection with parabolic subgroups of semisimple rank $(q-1-k)$. The group $\sG(\bQ)$ acts on $\fT(\sG)$ by conjugation on itself, which permutes the parabolic subgroups. See \cite{BrownBuildings}. 

Let $\bF$ be a coefficient field and consider the reduced homology group $\widetilde{\coh}_{q-1}(\fT(\sG),\bF)$. By definition the \emph{Steinberg module} $\Steinberg (\sG,\bF)$ is $\widetilde{\coh}_{q-1}(\fT(\sG),\bF)$ equipped with its natural $\bZ \sG(\bQ)$-module structure. 

By Solomon--Tits theorem (see \cite[Th.~IV.5.2]{BrownBuildings}), the complex $\fT(\sG)$ has the homotopy type of a bouquet of $(q-1)$-spheres. Let $\Omega(\sG)$ be the set of minimal parabolic subgroups of $\sG$. Then the chain group satisfies $\operatorname{C}_{q-1}(\fT(\sG),\bF)\cong \bF[\Omega(\sG)]$, and we have an exact sequence of $\bZ \sG(\bQ)$-modules
\begin{align}
& 0  \to
\Steinberg({\sG,\bF})
\to
\operatorname{C}_{q-1}(\fT(\sG), \bF) \cong \bF[\Omega(\sG)]
\to
\operatorname{C}_{q-2}(\fT(\sG),\bF)
\to
\cdots \notag
\\
& \to
\operatorname{C}_0(\fT(\sG),\bF)
\to
\bF
\to 0.\label{eq:Solomon-Tits}
\end{align}

We will also need the following (see also \cite[Th.~2.1]{APS}):
\begin{thm}[Reeder~{\cite[Prop.~1.1]{Reeder}}]\label{thm:Reeder}
Let $\sP$ be a parabolic subgroup and $\sL$ be a Levi factor of $\sP$. There exists an isomorphism of $\bZ\sP(\bQ)$-modules 
\[
\Induce{\sL(\bQ)}{\sP(\bQ)}\Steinberg(\sL,\bF)\cong 
\Restrict{\sP(\bQ)}{\sG(\bQ)}\Steinberg(\sG,\bF).
\]
\end{thm}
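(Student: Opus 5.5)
The plan is to build an explicit $\sP(\bQ)$-equivariant map $\Induce{\sL(\bQ)}{\sP(\bQ)}\Steinberg(\sL,\bF)\to\Restrict{\sP(\bQ)}{\sG(\bQ)}\Steinberg(\sG,\bF)$ out of apartment classes. I would then show it is bijective by comparing free bases over a unipotent group, using the Solomon basis of the Steinberg module.

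\textbf{Notation and the Solomon basis.} Write $\sP=\sL\ltimes\sU$ with $\sU$ the unipotent radical, and let $\sP^-$ be the parabolic opposite to $\sP$ with Levi factor $\sL=\sP\cap\sP^-$. Its unipotent radical is $\sU^-$. Recall the standard facts about $\Steinberg(\sG,\bF)$ that come with the Solomon--Tits theorem.
\begin{itemize}
\item Each apartment $A$ of $\fT(\sG)$ is a $(q-1)$-sphere, so it gives a fundamental class $[A]\in\widetilde{\coh}_{q-1}(\fT(\sG),\bF)$, well defined up to sign.
\item Apartments correspond to pairs of opposite minimal parabolics.
\item Fix a chamber $C_0$, i.e.\ a minimal parabolic $\sB_0$. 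The classes of the apartments containing $C_0$ form an $\bF$-basis of $\Steinberg(\sG,\bF)$ (Solomon's theorem, \cite[\S IV.5]{BrownBuildings}).
\item The chambers opposite $C_0$ form a single orbit of the unipotent radical $R_u(\sB_0)(\bQ)$, which acts simply transitively on them.
\end{itemize}
Consequently $\Restrict{R_u(\sB_0)(\bQ)}{\sG(\bQ)}\Steinberg(\sG,\bF)$ is free of rank one over $\bF[R_u(\sB_0)(\bQ)]$. It is generated by the class of any one apartment through $C_0$. The same statement holds for $\sL$.

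\textbf{The map.} Parabolics of $\sL$ correspond to parabolics of $\sG$ via $\sQ_\sL\mapsto \sQ_\sL\sU^-$; these are the parabolics of $\sG$ contained in $\sP^-$. I choose this correspondence, rather than $\sQ_\sL\mapsto \sQ_\sL\sU$, because I want $\sL$ to act compatibly on pairs of opposite chambers of both buildings. Concretely:
\begin{itemize}
\item Let $(\sB_\sL,\sB_\sL')$ be a pair of opposite minimal parabolics of $\sL$.
\item Then $(\sB_\sL\sU^-,\ \sB_\sL'\sU)$ is a pair of opposite minimal parabolics of $\sG$. I must check this carefully with roots: the root set of the first is $\Phi^+_\sL\cup\Phi(\sU^-)$, that of the second is $\Phi^-_\sL\cup\Phi(\sU)$, and these are complementary.
\item Define $\varphi\bigl([A(\sB_\sL,\sB_\sL')]\bigr)=[A(\sB_\sL\sU^-,\sB_\sL'\sU)]$, with orientations chosen compatibly.
\end{itemize}
Since $\sL$ normalizes $\sU$ and $\sU^-$, this assignment is $\sL(\bQ)$-equivariant. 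To see that it extends linearly to $\Steinberg(\sL,\bF)$, I would observe that $\varphi$ is induced by a simplicial map of chain complexes. The relations among apartment classes in top homology then come from $\fT(\sL)$ and are preserved. Frobenius reciprocity then gives a $\sP(\bQ)$-map $\Phi:\bF[\sP(\bQ)]\otimes_{\bF[\sL(\bQ)]}\Steinberg(\sL,\bF)\to\Steinberg(\sG,\bF)$, given by $p\otimes x\mapsto p\,\varphi(x)$.

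\textbf{Bijectivity.} As a $\sU(\bQ)$-module, the induced module is $\bF[\sU(\bQ)]\otimes_\bF\Steinberg(\sL,\bF)$, since $\sP=\sU\rtimes\sL$. Now take the base chamber of $\fT(\sL)$ to be $\sB_\sL'$; its unipotent radical is $\sU_\sL'$. Then $\Steinberg(\sL,\bF)$ is free of rank one over $\bF[\sU_\sL'(\bQ)]$, generated by $[A(\sB_\sL,\sB_\sL')]$. Hence the induced module is free of rank one over $\bF[\sU(\bQ)\rtimes\sU_\sL'(\bQ)]$. This group is the unipotent radical of $\sB'=\sB_\sL'\sU$, and that chamber lies in the image apartment. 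The generator is sent to $[A(\sB_\sL\sU^-,\sB')]$, which is a generator of $\Steinberg(\sG,\bF)$ as a free rank-one module over $\bF[R_u(\sB')(\bQ)]$. The map $\Phi$ is equivariant for this unipotent group, so it is an isomorphism.

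\textbf{Main obstacle.} The delicate step is the bookkeeping of oppositeness and orientations. I must make sure that $\varphi$ really lands on apartments whose chamber opposite to the base chamber varies correctly, and that $\Phi$ respects the two group structures. Specifically, the action of $\sU(\bQ)\rtimes\sU_\sL'(\bQ)$ on generators must correspond on both sides, including the signs of fundamental classes under $\sL(\bQ)$. I would first verify everything inside a single apartment, using the Weyl group combinatorics of the pair $(\sL\subset\sG)$, before passing to orbits.
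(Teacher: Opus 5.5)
The paper gives no proof of this statement; it quotes Reeder's Proposition~1.1. Judged on its own terms, your bijectivity argument is sound, but there is a genuine gap at the step that carries the content of the theorem: the claim that $\varphi$ extends linearly to $\Steinberg(\sL,\bF)$ because it is induced by a simplicial map.

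\textbf{Why that step fails.} No degree-preserving chain map can produce $\varphi$, because $\varphi$ must carry $\widetilde{\coh}_{q_\sL-1}(\fT(\sL))$ to $\widetilde{\coh}_{q-1}(\fT(\sG))$.

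\begin{itemize}
\item The correspondence you wrote down, $\mathscr{Q}\mapsto\mathscr{Q}\sU^-$, identifies $\fT(\sL)$ with the link of the simplex $\sigma^-$ of $\sP^-$. Its image lies in the star of $\sigma^-$, which is contractible, so it induces zero on homology when $\sP\neq\sG$.
\item Your formula sends $\mathscr{B}'_\sL$ to $\mathscr{B}'_\sL\sU$, not to $\mathscr{B}'_\sL\sU^-$. So it is not induced by that correspondence anyway.
\end{itemize}

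What $\varphi$ really does is send the apartment of a maximal $\bQ$-split torus $\mathscr{S}\subset\sL$ in $\fT(\sL)$ to the apartment of the same torus in $\fT(\sG)$. The substance of the theorem is that this assignment respects all linear relations among apartment classes, and you have not shown this. You also cannot sidestep it by defining $\varphi$ on the Solomon basis. That only gives $\sU'_\sL(\bQ)$-equivariance, whereas Frobenius reciprocity needs $\sL(\bQ)$-equivariance. Checking the latter means re-expanding $l\cdot[A]$ in the basis, which uses exactly those relations.

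\textbf{A repair.} Build the map in the opposite direction, where well-definedness is automatic. Let $\sigma,\sigma^-$ be the simplices of $\sP,\sP^-$.

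\begin{itemize}
\item For a simplex $\tau$, keep the chambers containing $\tau$ and delete $\tau$ from them. This gives a chain-level map $z\mapsto\operatorname{lk}_\tau(z)$ from top chains of $\fT(\sG)$ to top chains of $\operatorname{lk}(\tau)$.
\item It sends cycles to cycles, since the part of $\partial z$ containing $\tau$ is $\pm\tau*\partial\operatorname{lk}_\tau(z)$.
\item It is equivariant for the stabilizer of $\tau$. Elements of $\sL(\bQ)$ fix $\sigma^-$ vertexwise, so no signs intervene.
\item The simplices opposite $\sigma$ are exactly the $u\sigma^-$ with $u\in\sU(\bQ)$, each occurring once. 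Via your correspondence, $\operatorname{lk}(\sigma^-)\cong\fT(\sL)$, $\sL(\bQ)$-equivariantly.
\end{itemize}

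Hence
\[
\Psi(z)=\sum_{u\in\sU(\bQ)}u\otimes\operatorname{lk}_{\sigma^-}(u^{-1}z)
\]
is a finite sum. It defines a $\sP(\bQ)$-equivariant map
\[
\Restrict{\sP(\bQ)}{\sG(\bQ)}\Steinberg(\sG,\bF)\to\Induce{\sL(\bQ)}{\sP(\bQ)}\Steinberg(\sL,\bF).
\]

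Now evaluate $\Psi$ on your generator. The apartment of $\mathscr{S}$ in $\fT(\sG)$ contains exactly one simplex opposite $\sigma$, namely $\sigma^-$. Its link there is the apartment of $\mathscr{S}$ in $\fT(\sL)$. Therefore $\Psi$ sends $\widehat e=[A(\mathscr{B}_\sL\sU^-,\mathscr{B}'_\sL\sU)]$ to $\pm1\otimes e_\sL$, where $e_\sL=[A(\mathscr{B}_\sL,\mathscr{B}'_\sL)]$.

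From here your bijectivity argument applies verbatim. Both sides are free of rank one over $\bF[\sU'_\sL(\bQ)\ltimes\sU(\bQ)]$, and $\Psi$ maps generator to generator, so $\Psi$ is an isomorphism. Its inverse, restricted to $1\otimes\Steinberg(\sL,\bF)$, is your $\varphi$, which proves a posteriori that $\varphi$ was linear.
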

The isomorphism in Theorem~\ref{thm:Reeder} is not canonical; we refer to the one Reeder constructed whenever we use the theorem.

\subsection{A homology vanishing result}\label{subsec:homology}

For a connected reductive $\bQ$-algebraic group $\sG$, let $\chi(\sG)$ be the group of $\bQ$-characters of $\sG$, $q_{\sG}$ be the semisimple $\bQ$-rank of $\sG$, $r_\sG$ be the real rank of $\sG$, and $c_\sG$ be the $\bZ$-rank of $\chi(\sG)$. Define $\delta(\sG):=r_\sG-c_\sG-q_\sG$.

For our semisimple $\sH$, we have $r_\sH=r,q_\sH=q, c_\sH=0$, so that $\delta(\sH)=r-q$. For every Levi subgroup $\sL$ of a $\bQ$-parabolic subgroup $\sP$ of $\sH$, one has $r_\sL=r$ and $q_\sL+c_\sL=q$, so that $\delta(\sL)=r-q$.

\begin{prop}\label{prop:solvable-steinberg-vanishing-reductive}
Let $\sG$ be a connected reductive group defined over $\bQ$. 
Let $A<\sG(\bQ)$ be a finitely generated torsion-free solvable subgroup
contained in an arithmetic subgroup of $\sG$. Then for every $i>\delta(\sG)$,
\[
\coh_i\bigl(A,\Steinberg(\sG,\bF)\bigr)=0.
\]
\end{prop}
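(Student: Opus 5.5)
Induction on the semisimple $\bQ$-rank $q_\sG$, using the Solomon--Tits resolution \eqref{eq:Solomon-Tits} to split off the Steinberg module and Theorem~\ref{thm:Reeder} together with Shapiro's lemma to pass to Levi subgroups.

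\textbf{Base case.} If $q_\sG=0$, the Tits building is empty and $\Steinberg(\sG,\bF)=\widetilde{\coh}_{-1}(\emptyset,\bF)=\bF$ with trivial action. The group $A$ is torsion-free, solvable and contained in an arithmetic subgroup of $\sG$, which is anisotropic modulo its center. I would show that $A$ is virtually polycyclic of Hirsch length $h(A)\le r_\sG-c_\sG$. The bound should come from the following picture: $A$ is virtually contained in the $\bQ$-points of a solvable $\bQ$-subgroup, hence virtually a lattice in a connected solvable Lie subgroup of $\sG(\bR)$. For an anisotropic group, unipotent elements of arithmetic subgroups are trivial, so the relevant Lie group has no unipotent part coming from $\bQ$. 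It is therefore essentially a lattice in a torus. The $\bQ$-split characters contribute a compact-like direction killed by arithmeticity, so the split part of rank $c_\sG$ gives no cocompact $\bZ$-directions. The noncompact directions then number at most $r_\sG-c_\sG$.

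Since $A$ is torsion-free and virtually polycyclic, it is a Poincar\'e duality group of dimension $h(A)$ (a finite-index subgroup is poly-$\bZ$). Hence $\coh_i(A,\bF)=0$ for $i>h(A)$, and $\delta(\sG)=r_\sG-c_\sG$ in this case, which settles the base case.

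\textbf{Inductive step.} For $q\ge1$, split the exact sequence \eqref{eq:Solomon-Tits} into short exact sequences. Each chain module $\operatorname{C}_k(\fT(\sG),\bF)$ is a direct sum, over $\sG(\bQ)$-orbits of parabolics $\sP$ of the relevant type, of $\bF[\sG(\bQ)/\sP(\bQ)]$. Restricted to $A$, it becomes a sum over double cosets $A g\sP(\bQ)$ of $\bF[A/(A\cap g\sP g^{-1})]$. By Shapiro's lemma,
\[
\coh_i(A,\bF[A/A_g])=\coh_i(A_g,\bF), \qquad A_g:=A\cap g\sP(\bQ)g^{-1}.
\]
A dimension shift along the resolution then bounds $\coh_i(A,\Steinberg)$ by homology of the chain modules in degrees $i-(q-1-k)$ and of the trivial module $\bF$ in degree $i-q$. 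This route gives vanishing only above $h(A)-q$, roughly, so on its own it is too crude.

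The better route is to pick a parabolic $\sP$ adapted to $A$ and apply Theorem~\ref{thm:Reeder}. Since $A$ is solvable, some finite-index subgroup $A'$ lies in $\sP(\bQ)$ for some proper parabolic $\sP$, unless $A$ is virtually contained in an anisotropic torus. Passing to $A'$ is harmless: over a field, transfer gives a surjection $\coh_i(A',M)\to\coh_i(A,M)$ after adjusting the characteristic. In positive characteristic I would instead choose $\sP$ with $A\subset\sP(\bQ)$ directly, via a fixed point of $A$ on the spherical building. Then
\[
\Restrict{\sP(\bQ)}{\sG(\bQ)}\Steinberg(\sG)\cong\Induce{\sL(\bQ)}{\sP(\bQ)}\Steinberg(\sL)=\bF[\sU(\bQ)]\otimes\Steinberg(\sL),
\]
where $\sU$ is the unipotent radical. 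Take the Lyndon--Hochschild--Serre spectral sequence for $A\cap\sU(\bQ)\to A\to\bar A\subset\sL(\bQ)$. The $\sU$-part is a free module over $A\cap\sU$, so its homology is concentrated in degree $0$ and yields a module of the form $\bigoplus\Steinberg(\sL)$ over $\bar A$, suitably induced from stabilizers. The problem thus reduces to $\bar A$ acting on $\Steinberg(\sL)$ with $\delta(\sL)=\delta(\sG)$ and $q_\sL<q_\sG$. Induction, or the base case applied to $\sL$, then finishes. Here I need $\bar A$ to be torsion-free and arithmetic in $\sL$, which I would arrange by first passing to a neat subgroup.

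\textbf{Main obstacle.} Two points need care. The first is the induced-module bookkeeping: $\Induce{\sL}{\sP}$ restricted to $A$ decomposes over double cosets $A\backslash\sP(\bQ)/\sL(\bQ)$, and Shapiro's lemma must be applied to the groups $A\cap u\sL u^{-1}$. These groups are solvable, torsion-free and arithmetic in a conjugate Levi, so the inductive hypothesis applies to them, and this is likely simpler than the LHS argument. The second, which I expect to be the hardest, is the Hirsch-length bound $h(A)\le r-c$ in the $\bQ$-anisotropic base case.
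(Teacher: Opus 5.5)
\textbf{There is a gap: the case $q_\sG\ge 1$ with $A$ contained in no proper $\bQ$-parabolic subgroup is never handled.} This is the case the paper treats as its key lemma. You flag it (``unless $A$ is virtually contained in an anisotropic torus'') but never treat it, and your base case only covers $q_\sG=0$.

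Already $\sG=\operatorname{SL}_2$, $A=\langle\gamma\rangle$ with $\gamma=\begin{pmatrix}2&1\\1&1\end{pmatrix}$ falls here. No power of $\gamma$ has a rational eigenline, so Reeder's theorem is unavailable, and $\Steinberg(\sG,\bF)$ is not the trivial module.

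The paper closes this case in Lemma~\ref{lem:not-contained} with exactly the Solomon--Tits dimension shift you discarded as too crude:
\begin{itemize}
\item By Borel--Tits, $\overline{A}^0$ is a torus, so $A$ is virtually free abelian. Since $\bQ$-characters have finite image on $A$, one gets $\coh_j(A,\bF)=0$ for $j>r_\sG-c_\sG$.
\item For an $l$-simplex $\sigma$, the stabilizer $A\cap\sP_\sigma(\bQ)$ injects into the Levi $\sL_\sigma$, and $c_{\sL_\sigma}=c_\sG+l+1$. So its homology vanishes above $r_\sG-c_\sG-l-1$.
\item The shift raises degrees. The relevant terms are $\coh_{i+q_\sG-1-l}$ and $\coh_{i+q_\sG-l}$ of the chain modules and $\coh_{i+q_\sG}(A,\bF)$, not degrees $i-(q-1-k)$ and $i-q$ as you wrote. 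It yields $\coh_i(A,\Steinberg(\sG,\bF))\cong\coh_{i+q_\sG}(A,\bF)=0$ for $i>\delta(\sG)$.
\end{itemize}
The shift is too crude in general only because unipotent elements can push $h(A)$ above $r_\sG-c_\sG$. Borel--Tits excludes them precisely when $A$ lies in no proper parabolic. The same lemma also gives the base case $q_\sG=0$ directly.

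\textbf{Second gap: your reduction to a finite-index subgroup $A'\subset\sP(\bQ)$ does not work over every field.} The transfer argument needs $[A:A']$ invertible in $\bF$, and the result is later applied with $\bF=\bF_2$. Your fallback, a fixed simplex of $A$ in the building, need not exist.

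For example, in $\operatorname{SL}_4(\bZ)$ take $w=\begin{pmatrix}0&I_2\\u&0\end{pmatrix}$ with $u\in\operatorname{SL}_2(\bZ)$ of trace $4$. Its characteristic polynomial $x^4-4x^2+1$ is irreducible over $\bQ$, so $\langle w\rangle$ stabilizes no rational flag. Yet $w^2=\operatorname{diag}(u,u)$ preserves $\bQ^2\oplus 0$, so $A$ lies in a parabolic only up to index $2$.

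The paper never passes to finite index. It splits on whether $A$ \emph{itself} lies in a proper parabolic. If not, the lemma above applies. If so, Reeder's theorem plus Mackey's formula reduce to the groups $u^{-1}A_uu\subset\pi(A)$. These are isomorphic to subgroups of $A$, hence automatically torsion-free, so no neat subgroup (another finite-index passage) is needed. Your Mackey bookkeeping in the last paragraph is exactly this step.
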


We prove Proposition~\ref{prop:solvable-steinberg-vanishing-reductive} by induction on $q_\sG$, starting with:
\begin{lem}\label{lem:not-contained}
Proposition~\ref{prop:solvable-steinberg-vanishing-reductive} holds under the additional assumption that $A$ is not contained in any proper $\bQ$-parabolic subgroup of $\sG$. 
\end{lem}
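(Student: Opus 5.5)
The plan is to reduce the statement to a structural fact about $A$, and then to compute $\coh_*(A,\Steinberg(\sG,\bF))$ by dimension shifting along the Solomon--Tits sequence \eqref{eq:Solomon-Tits}. The case $\sG=\mathrm{SL}_2$, $A=\langle\gamma\rangle$ with $\gamma$ hyperbolic shows the mechanism. There $A$ acts freely on $\Omega=\bP^1(\bQ)$, and $\coh_1(A,\bF)\cong\bF$ is absorbed by the connecting map into $\coh_0(A,\Steinberg)$. So a naive bound on $\mathrm{cd}(A)$ alone cannot suffice, since $\mathrm{cd}(A)$ may exceed $\delta(\sG)$; the shift by $q$ coming from the resolution is essential.

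\textbf{Step 1: structure of $A$.} Let $\mathbf{B}$ be the identity component of the Zariski closure of $A$. It is a connected solvable $\bQ$-group normalized by $A$, and $A\cap\mathbf{B}(\bQ)$ has finite index in $A$. If its unipotent radical $\mathbf{U}$ were nontrivial, then by Borel--Tits $N_\sG(\mathbf{U})$ would lie in a proper $\bQ$-parabolic subgroup, and hence so would $A$. This is excluded, so $\mathbf{B}=\mathbf{T}$ is a $\bQ$-torus. The maximal $\bQ$-split subtorus of $\mathbf{T}$ must be central: otherwise its centralizer is a Levi subgroup of a proper $\bQ$-parabolic subgroup containing $A$ up to finite index, and one then passes to $A$ by taking normalizers. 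Thus $c_{\mathbf T}=c_\sG$. Since $A$ is torsion-free and virtually contained in an arithmetic subgroup of $\mathbf{T}$, Dirichlet's unit theorem gives that $A$ is virtually $\bZ^m$ with $m\le r_{\mathbf T}-c_{\mathbf T}\le r_\sG-c_\sG=\delta(\sG)+q_\sG$. Hence $\mathrm{cd}_\bF(A)\le \delta(\sG)+q$, and the same bound holds for every subgroup of $A$.

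\textbf{Step 2: dimension shifting.} Break \eqref{eq:Solomon-Tits} into short exact sequences $0\to Z_k\to \operatorname{C}_k\to Z_{k-1}\to0$, with $Z_{q-1}=\Steinberg(\sG,\bF)$ and $Z_{-1}=\bF$. Each chain module decomposes as
\[
\operatorname{C}_k(\fT(\sG),\bF)\cong\bigoplus_{\sP}\bF[\sG(\bQ)/\sP(\bQ)],
\]
where $\sP$ runs over standard parabolic subgroups of the appropriate type. By Shapiro's lemma,
\[
\coh_j\bigl(A,\bF[\sG(\bQ)/\sP(\bQ)]\bigr)=\bigoplus_{A g\sP}\coh_j\bigl(A\cap g\sP g^{-1},\bF\bigr).
\]
Iterating the long exact sequences gives $\coh_i(A,\Steinberg)\cong\coh_{i+q}(A,\bF)$, which vanishes for $i>\delta(\sG)$ by Step 1, \emph{provided} the chain terms vanish. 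Concretely, we need $\coh_{i+q-1-k}(A\cap g\sP g^{-1},\bF)=0$ for every parabolic $\sP$ corresponding to a $k$-simplex and every $i>\delta(\sG)$. This amounts to the bound
\[
\mathrm{cd}_\bF\bigl(A\cap g\sP g^{-1}\bigr)\le \delta(\sG)+q-1-k .
\]

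\textbf{Step 3: the main obstacle.} The difficulty is to bound the stabilizers $A\cap g\sP g^{-1}$. Each such stabilizer is virtually contained in $(\mathbf{T}\cap g\sP g^{-1})(\bQ)$. I would first try to show that these intersections are small. The centralizer of $\mathbf{T}$ contains no non-central $\bQ$-split torus, and a $\bQ$-subtorus $\mathbf{T}'\subset\mathbf{T}\cap g\sP g^{-1}$ projects isomorphically onto a $\bQ$-anisotropic-mod-center subtorus of the Levi quotient $\sL$ of $g\sP g^{-1}$. Applying the Dirichlet estimate inside $\sL$ should give
\[
\mathrm{rank}\le r_\sL-c_\sL\le r-q+(\text{corank of }\sP)-1=\delta(\sG)+q-1-k.
\]
Here I use $r_\sL=r$ and $q_\sL+c_\sL=q$, and the fact that $\mathbf{T}'$ meets the split central torus of $\sL$ only in a finite group, because $\mathbf{T}$ is anisotropic modulo the center of $\sG$. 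If this rank bookkeeping fails in some degree, the fallback is to replace the chain-level argument by the spectral sequence of the $A$-action on the building $\fT(\sG)$. One would then check the vanishing on the $E^1$-page simplex type by simplex type, using the same Dirichlet bounds for each $\sL$.
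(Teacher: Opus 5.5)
Your proposal is essentially the paper's proof. Borel--Tits forces $\overline{A}^0$ to be a torus, one then dimension-shifts along the Solomon--Tits resolution using Shapiro's lemma, and the Step~3 bound you aim for is exactly the paper's estimate $\coh_j(A_\sigma,\bF)=0$ for $j>r_{\sL_\sigma}-c_{\sL_\sigma}=r_\sG-c_\sG-l-1$, obtained by injecting $A_\sigma$ into an arithmetic subgroup of the Levi factor $\sL_\sigma$.

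Several of your side justifications are wrong, but none of them is needed.
\begin{itemize}
\item Since $A$ is arithmetic, every $\bQ$-character of $\mathbf{T}$ has finite image on the Zariski-dense subgroup $A\cap\mathbf{T}(\bQ)$. So $\mathbf{T}$ is actually $\bQ$-anisotropic, i.e.\ $c_{\mathbf T}=0$, not $c_\sG$.
\item The claim that $Z_\sG(\mathbf{T})$ contains no non-central $\bQ$-split torus fails. So does the ``pass to $A$ by normalizers'' principle. Take $A=\langle w\oplus\epsilon\rangle\subset\mathrm{Sp}_4(\bZ)$, with $w$ of order $4$ on one hyperbolic plane and $\epsilon$ hyperbolic on the other. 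Then $A$ lies in no proper $\bQ$-parabolic, yet $(w\oplus\epsilon)^2$ fixes an isotropic line, and $Z_\sG(\mathbf{T})\supset\mathrm{Sp}_2\times\mathbf{T}$.
\item The clean reason in both Step~1 and Step~3 is the paper's: the group in question lies virtually in the kernel of all $\bQ$-characters of $\sG$ (resp.\ $\sL$). That kernel has real rank $r_\sG-c_\sG$ (resp.\ $r_\sL-c_\sL$).
\end{itemize}
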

\begin{proof}
Assume that $A$ is not contained in any proper $\bQ$-parabolic subgroup of $\sG$. Let $\overline{A}$ be its Zariski closure. Since $A$ is solvable, the identity component $\overline A^{0}$ is solvable. If the unipotent radical of $\overline A^{0}$ were non-trivial, then, by the Borel--Tits theorem~\cite[Prop.~3.1]{BorelTits}, its normalizer would be contained in a proper $\bQ$-parabolic subgroup of $\sG$. Since the unipotent radical is normal in $\overline A^{0}$, this would force $A$ to be contained in a proper rational parabolic subgroup, contrary to our assumption. Hence $\overline A^{0}$ is a torus. 

Since $\overline A^{0}$ is a torus, $A$ is virtually free abelian. Since $A$ is contained in an arithmetic subgroup, all $\bQ$-rational characters of $\sG$ have finite image on $A$. Therefore, $A$ is virtually contained in the intersection of the kernels of all $\bQ$-rational characters. Thus, its cohomological dimension is at most $r_\sG-c_\sG$. So,
\begin{equation}\label{eq:solvable-vanishing-caseone-formula-1}
\coh_j(A,\bF)=0
\qquad\text{for every } j>r_\sG-c_\sG.
\end{equation}

Let $\sigma$ be an $l$-simplex of the Tits building $\fT(\sG)$, and let
$\sP_\sigma$ be its stabilizer, which is a parabolic subgroup. Let $\sL_\sigma$ be a Levi factor of $\sP_\sigma$. Consider
\[
A_\sigma:=A\cap \sP_\sigma(\mathbb Q).
\]
Since $A$ contains no non-trivial unipotent elements, the group $A_\sigma$ injects into $\sL_\sigma$ under the projection $\sP_\sigma\to \sL_\sigma$. Its image is contained in an arithmetic subgroup of $\sL_\sigma$ by \cite[Prop.~4.1]{PlatonovRapinchuk}. Moreover,
\[
r_{\sL_\sigma}=r_\sG,
\qquad
c_{\sL_\sigma}=c_\sG+l+1.
\]
Thus the same rank estimate gives
\begin{equation}\label{eq:solvable-vanishing-caseone-formula-2}
\coh_j(A_\sigma,\bF)=0  \qquad\text{for every }
        j>r_\sG-c_\sG-l-1.
\end{equation}

Now, restricting $\sG(\bQ)$ action to $A$, we regard \eqref{eq:Solomon-Tits} as an exact sequence of $\bF A$-modules:
\begin{align*}
& 0  \to
\Steinberg({\sG,\bF})
\to
\operatorname{C}_{q_\sG-1}(\fT(\sG), \bF)
\to
\operatorname{C}_{q_\sG-2}(\fT(\sG),\bF)
\to
\cdots \notag
\\
& \to
\operatorname{C}_0(\fT(\sG),\bF)
\to
\bF
\to 0.
\end{align*}
For $0\leq l\leq q_\sG-1$, we have an isomorphism of $\bF A$-modules:
\begin{equation}\label{eq:solvable-vanishing-caseone-formula-3}
\operatorname{C}_l(\fT(\sG),\bF)\cong
\bigoplus_{[\sigma]\in A\backslash \fT(\sG)^{(l)}}
 \bF[A/A_\sigma].
\end{equation}
By Shapiro's lemma (see \cite[III.6.2]{BrownBook}) and by \eqref{eq:solvable-vanishing-caseone-formula-2} we get
\begin{equation}\label{eq:solvable-vanishing-caseone-formula-4}
\coh_j\big(A,\operatorname{C}_l(\fT(\sG),\bF)\big)\cong
\bigoplus_{[\sigma]}
\coh_j (A_\sigma,\bF)=0
\quad\text{for } j>r_\sG-c_\sG-l-1. 
\end{equation}

Now define, for $0\leq l\leq q_\sG-1$,
\[
Z_{-1}:=\bF,\quad
Z_l:=\ker\bigl(\operatorname{C}_l(\fT(\sG),\bF)\to \operatorname{C}_{l-1}(\fT(\sG),\bF)\bigr).
\]
Then $Z_{q_\sG-1}=\Steinberg({\sG,\bF})$. There is a short exact sequence
\begin{equation}\label{eq:solvable-vanishing-caseone-formula-5}
0\longrightarrow Z_l \longrightarrow
\operatorname{C}_l(\fT(\sG),\bF)
\longrightarrow Z_{l-1} \longrightarrow 0.
\end{equation}
Fix $i>\delta(\sG)$. For every $0\leq l\leq q_\sG-1$, we have $i+q_\sG-l-1>r_\sG-c_\sG-l-1$, so by \eqref{eq:solvable-vanishing-caseone-formula-4}
\begin{equation}\label{eq:solvable-vanishing-caseone-formula-6}
\coh_{i+q_\sG-l-1} \big(A,\operatorname{C}_l(\fT(\sG),\bF)\big)
=
\coh_{i+q_\sG-l} \big(A,\operatorname{C}_l(\fT(\sG),\bF)\big)=0.
\end{equation}
Therefore the long exact sequence associated with \eqref{eq:solvable-vanishing-caseone-formula-5} gives
\begin{equation}\label{eq:solvable-vanishing-caseone-formula-7}
\coh_{i+q_\sG-l-1} \big(A,Z_l\big)\cong 
\coh_{i+q_\sG-l} \big(A,Z_{l-1}\big).
\end{equation}
Applying \eqref{eq:solvable-vanishing-caseone-formula-7} to $l=q_\sG-1,q_\sG-2,\cdots,0$, we obtain
\begin{equation}\label{eq:solvable-vanishing-caseone-formula-8}
\coh_{i} \big(A,\Steinberg({\sG,\bF})\big)\cong
\coh_{i} \big(A,Z_{q_\sG-1}\big)\cong 
\coh_{i+q_\sG} \big(A,Z_{-1}\big)\cong
\coh_{i+q_\sG} \big(A,\bF\big).
\end{equation}
As $i>\delta(\sG)$, we have $i+q_\sG>r_\sG-c_\sG$, and we deduce from \eqref{eq:solvable-vanishing-caseone-formula-1}
\begin{equation}\label{eq:solvable-vanishing-caseone-formula-9}
\coh_{i} \big(A,\Steinberg({\sG,\bF})\big)\cong
\coh_{i+q_\sG} \big(A,\bF\big)=0.
\end{equation}
This proves the lemma.
\end{proof}

\begin{proof}[Proof of Proposition~\ref{prop:solvable-steinberg-vanishing-reductive}]
We argue by induction on $q_\sG$. If $q_\sG=0$, then $\fT(\sG)$ is empty and $\Steinberg(\sG,\bF)=\bF$. Since $\sG$ has no proper $\bQ$-parabolic subgroup in this case, the result follows from Lemma~\ref{lem:not-contained}. 

Now assume that the proposition is already proved when $q_\sG\le k$ for some $k\in \bN$. Assume that $\sG$ has $q_\sG=k+1$. By Lemma~\ref{lem:not-contained}, we may assume without loss of generality that $A$ is contained in a proper $\bQ$-parabolic subgroup $\sP<\sG$. Let $\sU$ be its unipotent radical and let $\sL$ be a Levi factor, so that $\sP=\sU\rtimes \sL$.

By Theorem~\ref{thm:Reeder} we get an isomorphism of $\bF A$-modules:
\begin{equation}\label{eq:solvable-vanishing-formula-1}
\Restrict{A}{\sG(\bQ)}\Steinberg(\sG,\bF)
\cong
\Restrict{A}{\sP(\bQ)} \Induce{\sL(\bQ)}{\sP(\bQ)}\Steinberg(\sL,\bF).
\end{equation}
By Mackey's formula \cite[III.5.6]{BrownBook}, we obtain
\begin{equation}\label{eq:solvable-vanishing-formula-2}
\Restrict{A}{\sP(\bQ)} \Induce{\sL(\bQ)}{\sP(\bQ)}\Steinberg(\sL,\bF)
\cong
\bigoplus_{[g]\in A\backslash \sP(\bQ)/\sL(\bQ)}
\Induce{A\cap g\sL(\bQ) g^{-1}}{A}\,
g\Steinberg(\sL,\bF).
\end{equation}
We identify $\sP/\sL$ with $\sU$, and denote 
\[
A_u:=A\cap u\sL(\bQ) u^{-1}.
\]
We combine \eqref{eq:solvable-vanishing-formula-1}, \eqref{eq:solvable-vanishing-formula-2} to get an isomorphism of $\bF A$-modules:
\begin{equation}\label{eq:solvable-vanishing-formula-3}
\Restrict{A}{\sG(\bQ)}\Steinberg(\sG,\bF)
\cong
\bigoplus_{[u]\in A\backslash \sP(\bQ)/\sL(\bQ)}
\Induce{A_u}{A}\, u\Steinberg(\sL,\bF).
\end{equation}
Applying Shapiro's lemma to each summand of \eqref{eq:solvable-vanishing-formula-3}, we get for any $i\in \bN$ 
\begin{equation}\label{eq:solvable-vanishing-formula-4}
\coh_i \bigl(A,\Steinberg(\sG,\bF) \bigr)
\cong 
\bigoplus_{[u]\in A\backslash \sP(\bQ)/\sL(\bQ)} 
\coh_i \bigl(A_u,  u\Steinberg(\sL,\bF)\bigr).
\end{equation}

Let $\pi: \sP\to \sL$ be the projection. Consider 
\[
\widetilde A_u:=u^{-1}A_u u<\sL(\bQ).
\]
We claim that
\begin{equation}\label{eq:solvable-vanishing-formula-5}
\widetilde A_u<\pi(A).
\end{equation}
Indeed, if $a\in A_u$, then $a=u\ell u^{-1}$ for some $\ell\in\sL(\mathbb Q)$, and \(\pi(a)=\ell=u^{-1}au\).

The group $\widetilde A_u$ is torsion-free and solvable. It is finitely
generated because solvable subgroups of arithmetic groups are virtually
polycyclic; see \cite{Raghunathan}. It is also contained in an arithmetic subgroup of $\sL$, since $\pi$ maps arithmetic subgroups of $\sP$ into arithmetic subgroups of $\sL$ by \cite[Prop.~4.1]{PlatonovRapinchuk}. 

Since $\sP$ is a proper parabolic subgroup, the semisimple $\bQ$-rank of $\sL$ is strictly smaller than that of $\sG$. Thus the induction hypothesis applies to $\widetilde A_u<\sL(\bQ)$. We note that
\[
r_\sL=r_\sG,
\qquad
c_\sL+q_\sL=c_\sG+q_\sG, 
\qquad \delta(\sL)=\delta(\sG).
\]
Therefore, for every $i>\delta(\sG)$, for every $[u]\in A\backslash \sP(\bQ)/\sL(\bQ)$,
\begin{equation}\label{eq:solvable-vanishing-formula-6}
\coh_i \bigl(\widetilde A_u,  \Steinberg(\sL,\bF)\bigr)=0.
\end{equation}
Conjugation by $u$ identifies $\coh_i \bigl(\widetilde A_u,  \Steinberg(\sL,\bF)\bigr)$ with $\coh_i \bigl(A_u,  u\Steinberg(\sL,\bF)\bigr)$. Inserting \eqref{eq:solvable-vanishing-formula-6} in \eqref{eq:solvable-vanishing-formula-4}, we obtain the desired vanishing. The proof is finished.

\end{proof}

\subsection{Poincar\'e--Bieri--Eckmann--Borel--Serre duality}\label{subsec:proof-cohomology-vanishing}

Let $G_1,\cdots,G_n$ be non-compact connected center-free simple linear algebraic groups. Let $G=G_1\times \cdots \times G_n$ and let $\Gamma < G$ be a torsion-free irreducible lattice. Assume that the real rank of $G$ is $\ge 2$. Let $Y$ be the symmetric space, the quotient of $G$ by its maximal compact subgroup. Let $X=Y/\Gamma$ be the quotient manifold. Denote by $d$ the dimension of $X,Y$. Denote by $r$ the real rank of $G$, and $r_1,\cdots, r_n$ the ranks of $G_1,\cdots,G_n$, so $r=\sum r_i$. 
Since each $Y_i$ has dimension $\ge 2r_i$, we have $d\ge 2r$, so 
\begin{equation}\label{eq:dr}
d-1>r.
\end{equation}
As $\Gamma$ is torsion-free, we will identify it with $\pi_1(X)$.  

\subsubsection{Uniform lattice}\label{subsubsec:uniform-lattice}

Assume first that $\Gamma$ is a uniform lattice, so $X$ is an aspherical compact orientable manifold of dimension $d$. Then $\Gamma=\pi_1(X)$ satisfies Poincar\'e duality: for any $\bZ \Gamma$-module $M$, for any $i\in \{0,\cdots, d\}$, one has
\begin{equation}\label{eq:poincare-duality}
\coh^i(\Gamma, M)=\coh_{d-i}(\Gamma,M).
\end{equation}
Theorem~\ref{mainthm:cohomology-vanishing} is particularly simple for uniform lattices. We explain the proof for illustration:
\begin{proof}[Proof of Theorem~\ref{mainthm:cohomology-vanishing} in the uniform case]
Since $\Gamma$ is a uniform lattice, it is a classical fact that $K$ is virtually a free abelian group of rank $\leq r$ (use for example Lie--Kolchin Theorem and Godement's cocompactness criterion). Hence it has virtual cohomological dimension $\le r$ and
\begin{equation}\label{eq:uniform-solvable-vanishing-formula-1}
\coh_j(K,\bF)=0, \qquad j>r.
\end{equation}
Combining \eqref{eq:dr}, \eqref{eq:uniform-solvable-vanishing-formula-1} and Shapiro's lemma (see \cite[III.6.2]{BrownBook}), we obtain
\begin{equation}\label{eq:uniform-solvable-vanishing-formula-2}
\coh_{d-1}(\Gamma,\bF[\Gamma/K])=\coh_{d-1}(K,\bF)=0.
\end{equation}
Then the conclusion follows from Poincar\'e duality.
\end{proof}

\subsubsection{General case}\label{subsubsec:non-uniform-lattice}

Now we drop the assumption that $\Gamma$ is uniform. By Margulis' arithmeticity theorem \cite{MargulisBook} it is an arithmetic group. Let $\sH$ be a $\bQ$-algebraic group such that $\sH(\bR)/\sK=G$, where $\sK$ is a real compact factor, and such that $\Gamma$ is the image of an arithmetic group $\Gamma'\subset \sH(\bQ)$. We assume that $\Gamma'\to \Gamma$ is a bijection (this can be achieved by passing to a finite-index subgroup) and denote abusively both $\Gamma,\Gamma'$ by $\Gamma$. Denote by $q$ the $\bQ$-rank of $\sH$.  

\begin{rem}\label{rem:Q-rank-one}
If $\Gamma$ is non-uniform and if all factors $G_i$ have real rank one, then $q=1$, see \cite[Lem.~1.1]{PrasadRankone}.
\end{rem}

Borel--Serre~\cite{BorelSerre} showed that $\Gamma$ has cohomological dimension $d-q$ and satisfies the Bieri--Eckmann duality \cite{BieriEckmann}: for any field $\bF$, for any $\bF \Gamma$-module $M$, for any $i\in \{0,\cdots, d-q\}$, one has
\begin{equation}\label{eq:BE-duality}
\coh^i(\Gamma, M)=\coh_{d-q-i}(\Gamma,\Steinberg(\sH,\bF) \otimes M).
\end{equation}

\begin{proof}[Proof of Theorem~\ref{mainthm:cohomology-vanishing}]
We have already replaced $\Gamma$ with a finite-index torsion-free subgroup in the beginning paragraph of \S\ref{subsubsec:non-uniform-lattice}. In particular $K$ is also torsion-free. 

By duality \eqref{eq:BE-duality}, to prove Theorem~\ref{mainthm:cohomology-vanishing} it suffices to prove 
\[
\coh_{d-q-1}(\Gamma,\Steinberg(\sH,\bF) \otimes \bF[\Gamma/K])=0.
\]
Then by Shapiro's lemma (see \cite[III.6.2]{BrownBook}) it suffices to prove 
\begin{equation}\label{eq:solvable-vanishing-aim}
\coh_{d-q-1}\bigl(K,\Steinberg(\sH,\bF)\bigr)=0
\end{equation}
where $\Steinberg(\sH,\bF)$ is viewed as a $\bF K$-module. As $\sH$ is semisimple, we have $\delta(\sH)=r-q$, so $d-q-1>\delta(\sH)$ by \eqref{eq:dr}. Solvable subgroups of arithmetic groups are virtually polycyclic; see \cite{Raghunathan}. In particular $K$ is finitely generated. Then \eqref{eq:solvable-vanishing-aim} follows from Proposition~\ref{prop:solvable-steinberg-vanishing-reductive}.
\end{proof}

\section{Actions on CAT(0) cube complexes}\label{sec:cube-complex}
Once Theorem~\ref{mainthm:cohomology-vanishing} is established, our proof of Theorem~\ref{mainthm:cube-complex} is inspired by an argument of Cornulier, see \cite[Lem.~7.I.2]{CornulierFW}.

\begin{proof}[Proof of Theorem~\ref{mainthm:cube-complex}]
Up to replacing $\Gamma$ with a finite-index subgroup (see the end of the proof for why this is harmless), we may assume that $\Gamma$ is torsion-free and satisfies the conditions needed in the proofs of \S\ref{sec:duality}. 

Let $\rho:\Gamma\to \Isom (\sC)$ be an action on a cube complex $\sC$. Let $\sW$ be the set of all hyperplanes of $\sC$. Let $\bF_2$ be a field with two elements and let $\bF_2[\sW]$ be the abelian group whose elements are finite formal sums of hyperplanes. Fix a vertex $o\in \sC$. Define, for $\gamma\in \Gamma$,
\begin{equation}\label{eq:cocycle-wall-definition}
c(\gamma):=\sum_{W\in \operatorname{Sep}(o,\gamma o)} [W]\in \bF_2[\sW]
\end{equation}
where $\operatorname{Sep}(o,\gamma o)$ is the subset of $\sW$ consisting of hyperplanes separating $o$ from $\gamma o$. Note that for any three distinct vertices $x,y,z$, we have 
\begin{equation}\label{eq:symmetric-difference-separation}
\operatorname{Sep}(x,z)=\operatorname{Sep}(x,y)\triangle\operatorname{Sep}(y,z)
\end{equation}
where $\triangle$ stands for symmetric difference. Therefore for $g,h \in \Gamma$, we get
\begin{equation}\label{eq:symmetric-difference-separation-2}
\operatorname{Sep}(o,gho)=\operatorname{Sep}(o,go)\triangle\operatorname{Sep}(go,gho)=\operatorname{Sep}(o,go)\triangle g\operatorname{Sep}(o,ho).
\end{equation}
Since we are using $\bF_2$ coefficients, \eqref{eq:cocycle-wall-definition} and \eqref{eq:symmetric-difference-separation-2} imply that $c$ is a cocycle:
\begin{equation}\label{eq:c-is-a-cocycle}
c(gh)=c(g)+gc(h), \qquad c\in \operatorname{Z}^1\big(\Gamma, \bF_2[\sW]\big).
\end{equation}
Any arithmetic group is finitely generated, see \cite{BorelHarishChandra}. We fix a finite symmetric generating set $M$ of $\Gamma$. Define
\[
E:=\bigcup_{m\in M} \operatorname{Sep}(o,mo),
\]
which is a finite set of hyperplanes. Let $\aO^1,\cdots, \aO^s\subset \sW$ be the finitely many $\Gamma$-orbits of hyperplanes with non-empty intersection with $E$. Define
\[
V:=\bigoplus_{i=1}^s \bF_2[\aO^i] \subset \bF_2[\sW],
\]
which is a $\Gamma$-invariant linear subspace. By construction, we have $c(m)\in V$ for every $m\in M$. Then \eqref{eq:c-is-a-cocycle} implies, by induction on word length, that $c(\gamma)\in V$ for every $\gamma\in \Gamma$. Thus, in fact, $c\in \operatorname{Z}^1(\Gamma, V)$.

For each orbit $\aO^i$, choose a base point $W_i\in \aO^i$. Then, as $\bF_2 \Gamma$-modules,
\begin{equation}\label{eq:wall-orbit-module}
\bF_2[\aO^i]\cong \bF_2[\Gamma/\Gamma_i]
\end{equation}
where $\Gamma_i$ is the stabilizer of $W_i$. Since by our assumption each $\Gamma_i$ is solvable, Theorem~\ref{mainthm:cohomology-vanishing} implies that, for every $i$, 
\begin{equation}\label{eq:cube-wall-cohomology-vanishing}
\coh^1\big(\Gamma,\bF_2[\aO^i]\big)=0, \quad \text{thus } \, 
\coh^1\big(\Gamma,V\big)=0. 
\end{equation}
Therefore $c$ is a coboundary and there exists $a\in V$ such that
\begin{equation}\label{eq:c-is-a-coboundary}
c(\gamma)=\gamma a-a=\gamma a +a, \quad \text{for every }\gamma\in \Gamma.
\end{equation}

Let $A:=\Supp(a)$ be the support of $a$, i.e.\ the finite set of hyperplanes that appear in the expression of $a$ as a formal sum. Then by \eqref{eq:c-is-a-coboundary}, for every $\gamma\in \Gamma$, 
\begin{equation}\label{eq:support-control}
\Supp (c(\gamma))\subset \gamma A\cup A,\qquad  \#\Supp(c(\gamma)) \le 2\# A.
\end{equation}
Thus the combinatorial distance of $\sC$ satisfies, for every $\gamma\in \Gamma$,
\begin{equation}\label{eq:combinatorial-distance-bounded}
d(o,\gamma o)=\# \operatorname{Sep}(o,\gamma o)=\# \Supp(c(\gamma))\le 2\# A.
\end{equation}
In particular, the orbit $\Gamma\cdot o\subset \sC$ is bounded in combinatorial metric. Hence, by Gerasimov's theorem~\cite{Gerasimov} (see also \cite[7.G]{CornulierFW}, \cite[Prop.~2.3]{LonjouUrech}), there exists a $\Gamma$-fixed vertex in $\sC$. Gerasimov's theorem also justifies why at the beginning of the proof it was harmless to take a finite-index subgroup: boundedness would not change. 
\end{proof}

\section{Actions on infinite-dimensional hyperbolic spaces}
\label{sec:rigidity}

\subsection{Superrigidity}
Let $\mathcal V$ be a real Hilbert space and \(\aE=\bR\oplus \mathcal V\) be a real Hilbert space equipped with a Lorentzian form $B$ of signature $(1,\infty)$:
\[
B\bigl((s,u),(t,v)\bigr)=st-\langle u,v\rangle_{\mathcal V}.
\]
Let $\bH^\aE$ be a real hyperbolic space of infinite dimension, realized by the hyperboloid model (see for example \cite{CantatTits,BurgerIozziMonod}):
\[
\bH^\aE
=
\{(t,v)\in \aE :B\bigl((t,v),(t,v)\bigr)=1,\ t>0\}.
\]
For \(x,y\in \bH^\aE\), the hyperbolic metric on $\bH^\aE$ satisfies
\[
B(x,y)=\cosh d(x,y).
\]
The metric space $\bH^\aE$ is $\operatorname{CAT}(-1)$, thus of telescopic dimension $1$ in the sense of Caprace--Lytchak~\cite{CapraceLytchak}. 

A group action by isometries on $\bH^\aE$ is called non-elementary if it does not fix any point in the Gromov compactification $\overline{\bH^\aE}$ and does not preserve any geodesic in $\bH^\aE$. 

\begin{rem}
If $G$ is of Hermitian type and if $\Gamma$ is uniform and torsion-free, then $\Gamma$ is a Kähler group and Theorem~\ref{mainthm:hyperbolic-lattice-rigidity} in this case is due to Delzant--Py~\cite{DelzantPy12} whose proof does not rely on Monod's superrigidity.  
\end{rem}

\begin{proof}[Proof of Theorem~\ref{mainthm:hyperbolic-lattice-rigidity}]
Existence and uniqueness of $\bH^\rho$ is exactly \cite[Proposition~4.3]{BurgerIozziMonod} (cf. \cite{DelzantPy12}). By abuse of notation we still denote the restriction of our action to $\bH^\rho$ by $\rho$. 

Since $\rho$ is non-elementary, \cite[Proposition~1.8]{CapraceLytchak} (see also \cite[Appendix]{BaderDuchesneLecureux}) provides a non-empty closed convex $\rho$-invariant subset $C\subset \bH^\rho$ on which the $\Gamma$ action is minimal and reduced in the sense of \cite{MonodRigidity}. Then \cite[Theorem~6, p.3]{MonodRigidity} (see also \cite[Corollary~1.9]{CapraceLytchak} and \cite[Appendix]{BaderDuchesneLecureux}) asserts that $\rho$ extends to a continuous homomorphism $\rho_G^C:G\to \Isom(C)$. Note that our lattices satisfy the weakly compact and square-integrable conditions of \cite[Theorem~6]{MonodRigidity} by \cite[Appendix~B]{MonodRigidity}. 

Moreover, since $C$ is not isometric to a Hilbert space, the proof of \cite[Theorem~6]{MonodRigidity} in \cite[\S~6.4]{MonodRigidity} says that $\rho_G^C$ factors through a continuous homomorphism $\rho_i^C:G_i\to \Isom(C)$ for exactly one $i$. 

Now let us show that $\rho_i^C$ is induced by a continuous homomorphism $\rho_i:G_i\to \Isom(\bH^\rho)$. Note that $\bH^\rho$ is the hyperboloid of a closed linear subspace $\aE^\rho\subset \aE$ where the restriction of the Lorentzian form $B$ is still Lorentzian. By minimality of $\bH^\rho$, the closure of the linear subspace of $\aE^\rho$ generated by $C$ is $\aE^\rho$ itself. Let \(f\in\Isom(C)\). Define formally
\begin{equation}\label{eq:Monod-Convex-extend-to-Hilbert}
T_0^f\big(\sum_{j=1}^n \lambda_jx_j\big)
:=
\sum_{j=1}^n \lambda_jf(x_j),
\qquad x_j\in C.
\end{equation}
We need to prove that \(T_0^f\) is well-defined. Suppose that \(\sum\lambda_jx_j=0\), and consider \(v=\sum_{j=1}^n\lambda_jf(x_j)\). 
For every \(y\in C\), we have
\begin{align*}
B(v,f(y))
&=
\sum_{j=1}^n\lambda_jB(f(x_j),f(y)) 
=
\sum_{j=1}^n\lambda_j\cosh d(f(x_j),f(y)) \\
&=
\sum_{j=1}^n\lambda_j\cosh d(x_j,y) 
=
B\left(\sum_{j=1}^n\lambda_jx_j,y\right)
=0.
\end{align*}
Thus \(v=0\), proving that \(T_0^f\) is well-defined. The same calculation gives \(B(T_0^f u,T_0^f v)=B(u,v)\). Applying the construction to \(f^{-1}\) shows that \(T_0^f\) is bijective. Hence $T_0^f$ extends uniquely to a continuous isometry $T^f:\aE^\rho\to \aE^\rho$. Moreover \eqref{eq:Monod-Convex-extend-to-Hilbert} shows that the construction is continuous in $f$ and gives a group homomorphism $f\mapsto T^f$. 
\end{proof}

\begin{rem}\label{rem:Monod-Py}
As we explained in the introduction, the interesting case for our later applications would be the case where all factors $G_i$ have real rank one and do not have property~(T). In this case, if there is a non-elementary action as in Theorem~\ref{mainthm:hyperbolic-lattice-rigidity}, then each factor $G_i$ is necessarily $\SO(1,m)$ for some $m$. The reason is as follows. First, the rank-one groups $\operatorname{Sp}(m,1), m\ge 2$ and $F_4^{(-20)}$ have property~(T) (see \cite[\S~3.3]{BHV}). Second, Stolowicz~\cite{Stolowicz} proved that $\operatorname{PU}(1,m),m\ge 2$ do not have continuous non-elementary actions on infinite-dimensional real hyperbolic spaces. Thus the only possibilities are $\SO(1,m)$. For $\SO(1,m)$, continuous non-elementary actions on infinite-dimensional real hyperbolic spaces are completely classified by Monod--Py~\cite{MonodPyhyperbolic}. 
\end{rem}

\subsection{Cremona group}
\label{subsec:cremona-hyperbolic}

Let $S$ be a complex projective surface. The group of birational transformations $\Bir(S)$ acts faithfully by isometries on an infinite-dimensional real hyperbolic space $\bH^\aE$ depending on $S$; we refer to \cite{CantatTits} for the construction. 

Choose an affine chart $\bC^2\subset \bP^2$ with coordinates $(x,y)$, and define the de Jonquières subgroup (often abbreviated as Jonquières subgroup) as
\begin{align}
 \Jonq :=\{ & f\in \Bir(\bP^2): f(x,y)=\big(p(x),\frac{a(x)y+b(x)}{c(x)y+d(x)}\big),\notag
\\
& p\in \PGL(2,\bC), \begin{pmatrix} a&b\\c&d\end{pmatrix}\in \PGL(2,\bC(x))\}.\label{eq:Jonq-definition}
\end{align}
It depends on the choice of the coordinates; different coordinates yield a conjugation by an element of $\PGL(3,\bC)$. It fits into a split short exact sequence
\begin{equation}\label{eq:Jonquires-exact-sequence}
1\to \PGL(2,\bC(x))\to \Jonq \to
\PGL(2,\bC)\to 1,
\end{equation}
and is isomorphic to the semidirect product $\PGL(2,\bC(x))\rtimes \PGL(2,\bC)$. The action of $\Jonq$ on $\bH^\aE$ fixes a point on $\partial \bH^\aE$, see \cite{CantatTits}. Combining Theorem~\ref{mainthm:hyperbolic-lattice-rigidity} with results of Monod--Py and Blanc--Cantat we get:
\begin{lem}\label{lem:rigidity-hyperbolic-cremona}
Let $G_1,\cdots,G_n$ be non-compact connected center-free simple linear algebraic groups. Let $G=G_1\times \cdots \times G_n$ and let $\Gamma < G$ be an irreducible lattice. Assume that the real rank of $G$ is $\ge 2$. Let $S$ be a complex projective surface. Let $\rho:\Gamma\to \Bir(S)$ be a homomorphism of groups with infinite image. Then either the image $\rho(\Gamma)$ is virtually isotopically projectively regularizable, or $S$ is rational, $\rho(\Gamma)$ is up to conjugation in $\Bir(\bP^2)$ contained in $\Jonq$, and the composition $\Gamma\to \Jonq\to \PGL(2,\bC)$ has finite kernel. 
\end{lem}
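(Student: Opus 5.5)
We will use Cantat's classification of subgroups of $\Bir(S)$ through their action on $\bH^\aE$. For a subgroup acting on $\bH^\aE$, either (a) the action is non-elementary; (b) it fixes a point of $\bH^\aE$; or (c) it fixes a unique point of $\partial\bH^\aE$ or preserves a geodesic. We treat these cases in turn for $\rho(\Gamma)$. Throughout, we may freely pass to finite-index subgroups of $\Gamma$, because the conclusions are stable under this and finite-index subgroups are again irreducible lattices in $G$.

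\emph{Case (a), non-elementary.} Here Theorem~\ref{mainthm:hyperbolic-lattice-rigidity} applies. On the minimal subspace $\bH^\rho$, the action factors through a continuous homomorphism $\rho_i:G_i\to\Isom(\bH^\rho)$ precomposed with $\Gamma\to G_i$. Since $\rho_i$ is non-elementary, $G_i$ has no property~(T), so by Remark~\ref{rem:Monod-Py} $G_i\cong\SO(1,m)$. The Monod--Py classification then says that $\bH^\rho$ is finite-dimensional, or else that the action is one of their explicit exotic representations. In all these cases the hyperbolic translation lengths of elements of $\Gamma$ are governed by the homomorphism $\Gamma\to G_i$, and the projection of $\Gamma$ to $G_i$ is dense. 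We aim for a contradiction with the Blanc--Cantat results: translation lengths of loxodromic elements in $\Bir(S)$ are $\log$ of dynamical degrees, which are Salem or Pisot numbers, and this set is well ordered. Continuity of $\rho_i$ together with density should produce loxodromic elements whose translation lengths accumulate at some positive value, for example from below. This contradicts well-orderedness. We expect this to be the main obstacle: one must check that such accumulation really occurs for a dense image in $\SO(1,m)$, for instance by taking elements $\gamma$ whose images approximate a given loxodromic element with translation length slightly smaller than it. So case (a) cannot occur.

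\emph{Case (b), fixed point in $\bH^\aE$.} By Cantat's theorem (see \cite{CantatTits}), a fixed point means the group has bounded degrees, and then it is virtually isotopically projectively regularizable: it is conjugate into $\Aut(Z)$, and a finite-index subgroup lies in $\Aut^0(Z)$, with $Z$ smooth projective. This gives the first alternative.

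\emph{Case (c), parabolic fixed point or invariant geodesic.} If a geodesic is preserved, then a finite-index subgroup fixes its endpoints. The endpoints correspond to loxodromic maps, and Blanc--Cantat / Cantat show that the stabilizer is then virtually abelian; in fact it is virtually $\bZ$ times a regularizable group. A virtually abelian image of a higher-rank irreducible lattice is finite, either by Margulis' normal subgroup theorem or, in rank-one-factor situations, by the finiteness of the abelianization of a finite-index subgroup. This contradicts the assumption that the image is infinite, so this subcase is excluded.

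If instead there is a unique fixed boundary point that is parabolic, then by Cantat's classification $\rho(\Gamma)$ preserves either an elliptic fibration or a rational fibration. In the elliptic case, the group is virtually abelian up to regularizable pieces, and we exclude it or regularize as above. In the rational fibration case, $S$ is rational and, after conjugation, $\rho(\Gamma)\subset\Jonq$. Finally, consider the composition $\Gamma\to\PGL(2,\bC)$. Its kernel is a normal subgroup of $\Gamma$, so by Margulis' normal subgroup theorem (valid for irreducible lattices of rank $\ge2$) it is either finite or of finite index. If it has finite index, then a finite-index subgroup of $\Gamma$ maps into $\PGL(2,\bC(x))$ and preserves the fibration fiberwise. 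We then argue, via superrigidity applied to the linear group $\PGL(2,\bC(x))$ over a function field, that the image is either finite or conjugate into a group of bounded degree. That would put us back in case (b), which is a contradiction. Hence the kernel is finite, which is the second alternative.
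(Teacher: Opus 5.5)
Your overall plan is the paper's. In the non-elementary case you use Theorem~\ref{mainthm:hyperbolic-lattice-rigidity}, Monod--Py and density. In the elementary cases you use Cantat's results. For invariant fibrations you use the normal subgroup theorem and non-archimedean superrigidity. However, the contradiction you propose in case (a) does not exist. A well-ordered subset of $\bR$ may perfectly well contain a strictly increasing sequence converging to a positive limit; well-orderedness only forbids infinite strictly \emph{decreasing} sequences. So producing elements $\gamma$ whose images approximate a loxodromic $g_0\in G_i$ with translation length \emph{slightly smaller} than that of $g_0$ contradicts nothing.

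You must approximate from above or, as the paper does, use the Blanc--Cantat gap property: a loxodromic $f\in\Bir(S)$ has $\ell(f)=\log\lambda(f)\ge\log\lambda_L>0$. Concretely, Monod--Py give $\ell(\rho_i(g))=t\,\ell(g)$ with $0<t\le 1$. On $\SO(1,m)$ the function $\ell$ is the logarithm of the spectral radius, hence continuous with image $[0,\infty)$. Density of the projection of $\Gamma$ in $G_i$ therefore yields $\gamma$ with $0<\ell(\rho(\gamma))<\log\lambda_L$; note that translation lengths on the invariant subspace $\bH^\rho$ agree with those on $\bH^\aE$. This also dissolves the ``main obstacle'' you anticipated.

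The second gap is the sentence ``in the rational fibration case, $S$ is rational''. This is false: a ruled surface over a curve $C$ of positive genus carries a $\Bir(S)$-invariant rational fibration and is not rational. The paper treats this case explicitly. Since $\Aut(C)$ is finite or virtually abelian, the normal subgroup theorem makes a finite-index subgroup act trivially on $C$. That subgroup then maps into $\PGL(2,\bC(C))$, and non-archimedean superrigidity gives a bounded image, hence regularizability. Your finite-index-kernel argument transposes from $\bC(x)$ to $\bC(C)$, but the case has to be addressed.

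There are also some smaller inaccuracies. The stabilizer of an invariant geodesic need not be virtually abelian: it can lie in the toric group $(\bC^*)^2\rtimes\GL(2,\bZ)$, which is why the paper invokes Delzant--Py. A cleaner route is to note that the translation (Busemann) character $\Gamma_0\to\bR$ of a finite-index subgroup vanishes, because $\Gamma_0$ has finite abelianization. Hence $\rho(\Gamma_0)$ has no loxodromic elements. In the geodesic case this forces a fixed point in $\bH^\aE$. In the boundary case it rules out a unique fixed boundary point with loxodromic elements, which you skipped by treating only the parabolic situation. Finally, your elliptic-fibration case is only gestured at. The paper uses Cantat's Proposition~5.12 to land in the automorphism group of a Halphen surface, which is virtually free abelian by Gizatullin, and then applies the normal subgroup theorem.
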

\begin{proof}
Assume first by way of contradiction that the action of $\rho(\Gamma)$ on $\bH^\aE$ via the Cremona action is non-elementary. Then we deduce from Theorem~\ref{mainthm:hyperbolic-lattice-rigidity} that $\rho$ factors through a continuous homomorphism $\rho_i:G_i\to \Isom(\bH^\aE)$. We remark that $G_i$ necessarily has real rank one. Indeed, if it had higher real rank then it would have property~(T) and could not have non-elementary action on $\bH^\aE$; see \cite[\S~4.2.2]{CantatTits}. Then by Remark~\ref{rem:Monod-Py} $G_i$ is $\SO(1,m)$ for some $m$. By Monod--Py's classification~\cite[Theorem~B]{MonodPyhyperbolic}, there exists $0<t\le 1$ such that every element $g\in G_i$ satisfies
\begin{equation}\label{eq:Monod-Py-t}
\ell(\rho_i(g),\bH^\aE)=t\ell(g,\bH^m)
\end{equation}
where the right-hand side is the translation length in $\bH^m$ while the left-hand side is the translation length in $\bH^\aE$ via the Cremona action. Since $\Gamma$ is an irreducible lattice, its projection in $G_i$ is dense in the Euclidean topology. Therefore \eqref{eq:Monod-Py-t} implies that there exists $\gamma\in \Gamma$ such that $\ell(\rho(\gamma),\bH^\aE)$ is arbitrarily close to $0$. This contradicts $\rho(\gamma)\in \Bir(S)$ by the gap property of \cite[Corollary~2.7(2)]{BlancCantat}.

Therefore $\rho(\Gamma)$ is elementary, i.e.\ it either fixes a point in $\overline{\bH^\aE}$ or preserves a geodesic in $\bH^\aE$. 
If $\rho(\Gamma)$ fixes a point in $\bH^\aE$, then \cite[Proposition~3.10]{CantatTits} says that it is virtually isotopically projectively regularizable. 

If $\rho(\Gamma)$ preserves a geodesic in $\bH^\aE$, then by \cite[Theorem~4, and the unnumbered statement before \S~6]{DelzantPy12} it is either virtually cyclic or contained in the so-called toric subgroup (a group with a semidirect product structure $(\bC^*)^2\rtimes \GL(2,\bZ)$). Both alternatives contradict Margulis' normal subgroup theorem~\cite{MargulisBook} for $\Gamma$.

If it fixes a point in $\partial\bH^\aE$, then \cite[Proposition~5.12]{CantatTits} says that, up to conjugation in $\Bir(S)$, it preserves a rational fibration or it is contained in the automorphism group of a Halphen rational surface. Note that the automorphism group of a Halphen surface is virtually free abelian by \cite{Gizatullin} (see also \cite[Proposition~5.2]{CantatTits}). The latter alternative is impossible in our case again because of Margulis' normal subgroup theorem. 

It remains to examine the case where $\rho(\Gamma)$ preserves a rational fibration. Then there is an induced action of $\Gamma$ on the base of this rational fibration. If this base is $\bP^1$, then the surface $S$ is rational and $\rho(\Gamma)$ is up to conjugation contained in $\Jonq$. If this base is some other Riemann surface $C$, then, because $C$ has finite or virtually abelian automorphism group, Margulis' normal subgroup theorem implies that the $\rho(\Gamma)$ action on this base is finite. In other words, after passing to a finite-index subgroup, the $\Gamma$-action preserves the fibration fiberwise and can be seen as an algebraic family of actions on $\bP^1$. Hence, up to replacing $\Gamma$ with a finite-index subgroup, $\rho$ induces a homomorphism $\Gamma\to \PGL(2,\bC(C))$. By Margulis' superrigidity in the non-archimedean case applied to valuations on $\bC(C)$, the image of $\Gamma$ is bounded. Hence up to conjugation it has a finite-index subgroup contained in $\PGL(2,\bC)$. In particular it is virtually isotopically projectively regularizable.

If the action on the base of the rational fibration is infinite, then by Margulis' normal subgroup theorem, the kernel of this base action is finite. 
\end{proof}

\begin{lem}\label{lem:limited-type-lie-factors}
If $\Gamma$ is as in Lemma~\ref{lem:rigidity-hyperbolic-cremona} and $\rho:\Gamma\to \Jonq$ is a homomorphism with infinite image, then every $G_i$ is either $\PSL(2,\bR)$ or $\PGL(2,\bC)$.
\end{lem}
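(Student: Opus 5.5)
Consider the composition $\bar\rho:\Gamma\to\Jonq\to\PGL(2,\bC)$ coming from the split exact sequence \eqref{eq:Jonquires-exact-sequence}, together with the restriction of $\rho$ to the kernel. The plan is to show that either $\bar\rho$ has infinite image, in which case Margulis superrigidity forces every $G_i$ to be $\PSL(2,\bR)$ or $\PGL(2,\bC)$, or $\bar\rho$ has finite image, in which case a finite-index subgroup maps into $\PGL(2,\bC(x))$ and we argue directly there.

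\emph{Case 1: $\bar\rho(\Gamma)$ is infinite.} By Margulis' normal subgroup theorem, $\ker\bar\rho$ is finite. Hence $\bar\rho(\Gamma)\subset \PGL(2,\bC)$ is an infinite linear image of $\Gamma$, and its Zariski closure is not virtually solvable, again by the normal subgroup theorem (solvable quotients of $\Gamma$ are finite). So the Zariski closure has identity component $\PGL(2,\bC)$, after passing to a finite-index subgroup. Now apply Margulis superrigidity over the local fields $\bC$ (archimedean) and $\bC$-valued non-archimedean completions. First, since $\bar\rho$ has finite kernel, its image is unbounded for some place, else $\bar\rho(\Gamma)$ would be finite (the image is finitely generated and bounded in all completions). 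For an unbounded archimedean place, superrigidity gives a continuous homomorphism $G\to\PGL(2,\bC)$, viewed as a real Lie group, extending $\bar\rho$ virtually, and nontrivial on some factor. Non-archimedean unboundedness is impossible, since superrigidity forces boundedness there for archimedean $G$. Irreducibility of $\Gamma$ together with finiteness of $\ker\bar\rho$ forces the extension to be injective modulo compact pieces on \emph{every} factor $G_i$. Otherwise some factor $G_j$ would lie in the kernel of the extension, and the dense projection of $\Gamma$ would give an infinite kernel. Thus each $G_i$ embeds, up to local isomorphism, into $\PGL(2,\bC)$ as a closed non-compact simple subgroup. Since the $G_i$ are center-free, each $G_i$ is $\PSL(2,\bR)$ or $\PGL(2,\bC)$. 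Moreover, $\prod G_i$ must embed with finite kernel into $\PGL(2,\bC)$, which has real rank one. This contradicts $\operatorname{rank}_\bR G\ge2$ unless there is a subtlety. I expect the precise bookkeeping here to be the main obstacle: one must extend each factor separately, using the fact that two distinct factors cannot both map nontrivially into $\PGL(2,\bC)$ with commuting images. If they did, Case 1 would be outright impossible, which is still consistent with the lemma's conclusion.

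\emph{Case 2: $\bar\rho(\Gamma)$ is finite.} Pass to a finite-index subgroup $\Gamma_0$ with $\rho(\Gamma_0)\subset\PGL(2,\bC(x))$. Then apply Margulis superrigidity in the non-archimedean case, as in the proof of Lemma~\ref{lem:rigidity-hyperbolic-cremona}. For every valuation of $\bC(x)$ (points of $\bP^1$), the image is bounded. Since $\rho(\Gamma_0)$ is finitely generated, it is bounded at all places of the function field $\bC(\bP^1)$, hence conjugate to a subgroup of $\PGL(2,\bC)$. The conjugation is an elementary transformation of the $\bP^1$-bundle, using that the Bruhat–Tits tree fixed points glue over $\bP^1$. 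We are then back to an infinite linear representation $\Gamma_0\to\PGL(2,\bC)$ with infinite image. This representation has finite kernel by the normal subgroup theorem, and the argument of Case 1 applies verbatim to yield that every $G_i$ is $\PSL(2,\bR)$ or $\PGL(2,\bC)$.

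The key shared step in both cases is the claim that an infinite-image (hence finite-kernel) homomorphism from an irreducible higher-rank lattice to $\PGL(2,\bC)$ forces each simple factor to be locally isomorphic to a non-compact subgroup of $\PGL(2,\bC)$. I would prove this claim once, as a sublemma, via archimedean superrigidity and density of projections.
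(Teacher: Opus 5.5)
\textbf{There is a genuine gap at the end of Case~1.} Your case split, the use of the normal subgroup theorem, and the reduction of Case~2 to Case~1 (bounded at every place of $\bC(x)$, hence conjugate into $\PGL(2,\bC)$) follow the paper. The problem is your claim that the superrigid extension $\Phi\colon G\to\PGL(2,\bC)$ is nontrivial on \emph{every} factor. This claim is false, and your justification does not work. If $G_j\subset\ker\Phi$, you only get $\Gamma\cap G_j\subset\ker\bar\rho$. For an irreducible lattice, $\Gamma\cap G_j$ is trivial: it is a discrete subgroup of the center-free group $G_j$ normalized by the dense subgroup $p_j(\Gamma)$. So nothing forces $\ker\bar\rho$ to be infinite.

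For a concrete counterexample, take $\Gamma=\PSL(2,\bZ[\sqrt2])<\PSL(2,\bR)^2$. Let it act by M\"obius transformations on the base coordinate $x$, with $y$ fixed. This is a homomorphism into $\Jonq$ with $\bar\rho$ injective, and its extension to $G$ is the projection onto one factor. So Case~1 genuinely occurs, and an argument that reaches a contradiction there (``$\prod G_i$ embeds with finite kernel into a rank-one group'') is proving something false. As you half-noticed, because $\PGL(2,\bC)$ has real rank one, the extension factors through exactly one $G_i$. Superrigidity then tells you only that this one factor is $\PSL(2,\bR)$ or $\PGL(2,\bC)$, and nothing about the others.

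\textbf{The missing idea is the passage from one factor to all factors.} The paper gets it from the fact that a semisimple group admitting an irreducible lattice is isotypic \cite[Cor.~IX.4.5]{MargulisBook}. By arithmeticity, the $G_i$ are the non-compact archimedean completions of a single absolutely simple group over a number field. Once one $G_i$ is $\PSL(2,\bR)$ or $\PGL(2,\bC)$, that group is a form of $\PGL_2$. Then every $G_i$, being connected, center-free and non-compact, is $\PSL(2,\bR)$ (at a real place) or $\PGL(2,\bC)$ (at a complex place). Replace your ``every factor embeds'' step with this and Case~1 closes. Case~2 then goes through, since it reduces to Case~1.
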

\begin{proof}
Consider the composition $\rho_B:\Gamma\to \Jonq\to \PGL(2,\bC)$ by using \eqref{eq:Jonquires-exact-sequence}. Assume first that $\rho_B$ has infinite image. Then Margulis' superrigidity implies that $\rho_B$ is induced by a continuous homomorphism $G\to \PGL(2,\bC)$. Therefore at least one $G_i$ admits a continuous homomorphism $G_i\to \PGL(2,\bC)$ with positive dimensional image. This forces this $G_i$ to be $\PSL(2,\bR)$ or $\PGL(2,\bC)$. Since $G$ admits an irreducible lattice, it is isotypic (see \cite[Cor.IX.4.5]{MargulisBook}). Hence each $G_i$ is either $\PSL(2,\bR)$ or $\PGL(2,\bC)$. 

Assume now that $\rho_B$ has finite image. Then replacing $\Gamma$ with a finite-index subgroup, we may assume that $\rho$ is a homomorphism from $\Gamma$ to $\PGL(2,\bC(x))$. By Margulis' superrigidity again, up to conjugation it has a finite-index subgroup contained in $\PGL(2,\bC)$. Then repeating the above argument finishes the proof. 
\end{proof}

\begin{proof}[Proof of Theorem~\ref{mainthm:Zimmer-no-action}]
Assume that there is a homomorphism $\rho:\Gamma\to \Bir(S)$ with infinite image. By Lemma~\ref{lem:rigidity-hyperbolic-cremona} it is either virtually isotopically projectively regularizable or up to conjugation contained in $\Jonq$. In the latter case Lemma~\ref{lem:limited-type-lie-factors} closes the proof. 

Therefore up to replacing $\Gamma$ with a finite-index subgroup and up to conjugation, we may assume that $\rho(\Gamma)$ is contained in $\Aut^0(S)$. Without loss of generality we can also assume that $\rho(\Gamma)$ does not preserve any rational fibration because otherwise the proof of Lemma~\ref{lem:limited-type-lie-factors} already closes the proof. We refer to \cite{CantatMilnor} for the information on $\Aut^0$ that we will use below. If $S$ had non-negative Kodaira dimension, then $\Aut^0(S)$ would be trivial or a complex torus, contradicting Margulis' normal subgroup theorem. Hence $S$ has negative Kodaira dimension; it is rational or birational to a ruled surface over a non-rational curve. If $S$ is birational to a non-rational ruled surface or if $S$ is obtained by blow-ups from a Hirzebruch surface, then $\Aut^0(S)$ preserves the rational fibration (the ruling), which is out of our consideration by our hypothesis. The only possibility left is $S=\bP^2$ and $\Aut^0(S)=\PGL(3,\bC)$. We conclude as in the proof of Lemma~\ref{lem:limited-type-lie-factors} by using Margulis' superrigidity with target group $\PGL(3,\bC)$.
\end{proof}

\section{Actions on the Jonquières complex}
\label{sec:Jonquieres}

\subsection{The Jonquières complex}\label{subsec:jonquires}
Lonjou--Urech constructed in \cite{LonjouUrech} an infinite-dimensional CAT(0) cube complex on which $\Bir(\bP^2)$ acts by isometries. Then Lonjou--Przytycki--Urech studied in \cite{LPU} a sub-complex $\fJ$ on which the de Jonquières subgroup acts by isometries, called the Jonquières complex. We summarize some features of the Jonquières complex needed in our proof and refer the detailed construction to \cite{LonjouUrech,LPU}. 

One first constructs, for $p\in \bP^1$, a pointed combinatorial tree $(X_p,x_p)$ encoding the relations of elementary transformations of a ruled surface along the fiber $\{p\}\times \bP^1$. Roughly speaking, the vertices of $X_p$ are conic bundles isomorphic to $\bP^1\times \bP^1$ outside the fiber $\{p\}\times \bP^1$, the edges of $X_p$ are single blow-ups or blow-downs between such conic bundles, and the choice of $x_p\in X_p$ is arbitrary. One can also imagine $X_p$ as a barycentric subdivision of the Bruhat--Tits tree of $\PGL(2,\bC(\!(X)\!))$, or as a subtree of the Berkovich projective line over $\bC(\!(X)\!)$. 

The Jonquières complex $\fJ$ is an infinite-dimensional CAT(0) cube complex isomorphic to the restricted product
\[
\fJ=\bigoplus_{p\in \bP^1} (X_p,x_p),
\]
whose vertices are sections $(y_k)_{k\in \bP^1}$ with $y_k\in X_k$ such that all but finitely many $y_k$ are equal to $x_k$, and whose cubes have the form $\prod_{k\in \bP^1}I_k$ where all but finitely many $I_k$ are equal to $\{x_k\}$ while the remaining $I_k$ are edges of $X_k$. The Jonquières group $\Jonq$ (see \eqref{eq:Jonq-definition}) acts on $\fJ$ by isometries of cube complexes. See \cite[Lemma~3.4]{LPU}.

\subsection{Proof of Theorem~\ref{mainthm:Lattice-regularizable}}
\begin{proof}
Without loss of generality, up to taking a finite-index subgroup, we assume that $\Gamma$ is torsion-free.

By Lemma~\ref{lem:rigidity-hyperbolic-cremona}, we can assume without loss of generality that $S=\bP^2$ and $\rho:\Gamma\to \Bir(\bP^2)$ has image contained in $\Jonq$. Hence, there is an induced action $\xi:\Gamma\to \Isom(\fJ)$ on the Jonquières complex. 

We remark first that every edge of $\fJ$ is an edge of some tree $X_p$. Then we remark that, for distinct points $p,q\in \bP^1$, an edge of $X_p$ is never a parallel edge of any edge of $X_q$ in any cube of $\fJ$ because of the restricted product structure of $\fJ$. Therefore the hyperplanes of \(\fJ\) are naturally indexed by pairs \((p,e)\), where \(p\in \bP^1\) and \(e\) is an edge of the tree \(X_p\). In particular, if an element stabilizes such a hyperplane, its image in \(\PGL(2,\bC)\) stabilizes the point \(p\). More precisely, if $G\subset \Gamma$ is a subgroup such that $\xi (G)$ fixes a hyperplane corresponding to an edge in $X_p$ for some $p\in \bP^1$, then the image of the composition $G\to \Jonq\to \PGL(2,\bC)$ fixes $p$. As $\Gamma$ is torsion-free, the last assertion of Lemma~\ref{lem:rigidity-hyperbolic-cremona} says that the composition $\Gamma\to \Jonq\to \PGL(2,\bC)$ is injective. Hence every stabilizer of a hyperplane injects into the stabilizer in $\PGL(2,\bC)$ of some $p\in \bP^1$. In particular, every hyperplane stabilizer is solvable. By Theorem~\ref{mainthm:cube-complex} there is a $\Gamma$-fixed vertex. This implies, by construction of $\fJ$, that $\rho(\Gamma)$ is projectively regularizable, see for example \cite[Prop.~3.11]{LonjouUrech}. A projectively regularizable group which is not virtually isotopically projectively regularizable must contain a loxodromic element or a Halphen twist, while the Jonquières group does not contain such elements, see \cite[\S~6.5]{CantatTits}. The proof is finished.
\end{proof}

\section{Actions on Hirzebruch surfaces}\label{sec:hirzebruch}
Consider irreducible lattices in Lie groups of the form 
\[
G=\PSL(2,\bR)^n\times \PGL(2,\bC)^m,\qquad m+n\ge 2.
\]
By Lemma~\ref{lem:limited-type-lie-factors} these are the only higher-rank lattices that have infinite action on surfaces by birational transformations preserving a rational fibration. 
Let $\Gamma$ be an irreducible lattice in $G$. 

For $k>1$, the automorphism group of the Hirzebruch surface $\sF_k$ is connected and is a semidirect product (see \cite{Maruyama})
\begin{equation}\label{eq:hirzebruch-automorphism}
\Aut(\sF_k) =\coh^0(\bP^1,\sO(k))\rtimes 
\bigl(\GL(2,\bC)/\mu_k\bigr)
\end{equation}
where $\mu_k=\{\zeta I_2\in \GL(2,\bC): \zeta^k=1\}$. 
Moreover $\Aut(\sF_k)$ is linear with unipotent radical $\coh^0(\bP^1,\sO(k))$. 

Let $\rho:\Gamma\to \Aut(\sF_k)$ be a homomorphism with infinite image. Then by Margulis' superrigidity it is induced by a continuous homomorphism from $G$ to $\Aut(\sF_k)$, which factors through one of the factors. On the level of Lie algebras, there is a morphism
\[
\mathfrak{g}\to \coh^0(\bP^1,\sO(k))\rtimes \mathfrak{Lie}(\GL(2,\bC)/\mu_k), \quad x\mapsto (b(x),a(x)).
\]
Here $b$ defines a $1$-cocycle in $\coh^1(\mathfrak{g},\coh^0(\bP^1,\sO(k)))$ via $x\mapsto a(x)$. By Whitehead's lemma~\cite[Cor.~7.8.10]{Weibel} $b$ is a coboundary. In other words, up to conjugation by an element of $\coh^0(\bP^1,\sO(k))$, the image of $\rho$ is necessarily contained in the subgroup $\big(\GL(2,\bC)/\mu_k\big)<\Aut(\sF_k)$, and is thus induced by one of the natural maps from $\Gamma$ to $\GL(2,\bC)$.

\addtocontents{toc}{\protect\setcounter{tocdepth}{-1}}
\section*{Acknowledgements}
\addtocontents{toc}{\protect\setcounter{tocdepth}{2}}
I would like to thank Serge Cantat for introducing me to the topic and Christian Urech for related discussions. 

I acknowledge partial support from the French National Research Agency under the projects GAG (ANR-24-CE40-3526-01) and DynAtrois (ANR-24-CE40-1163).

\bibliographystyle{amsplain}
\bibliography{biblio}

\providecommand{\bysame}{\leavevmode\hbox to3em{\hrulefill}\thinspace}
\providecommand{\MR}{\relax\ifhmode\unskip\space\fi MR }
\providecommand{\MRhref}[2]{%
  \href{http://www.ams.org/mathscinet-getitem?mr=#1}{#2}
}
\providecommand{\href}[2]{#2}
\begin{thebibliography}{10}

\bibitem{APS}
Avner Ash, Andrew Putman, and Steven~V Sam, \emph{Homological vanishing for the
  {Steinberg} representation}, Compos. Math. \textbf{154} (2018), no.~6,
  1111--1130.

\bibitem{BaderDuchesneLecureux}
Uri Bader, Bruno Duchesne, and Jean L{\'e}cureux, \emph{Furstenberg maps for
  {CAT}(0) targets of finite telescopic dimension}, Ergodic Theory Dyn. Syst.
  \textbf{36} (2016), no.~6, 1723--1742.

\bibitem{BeauvilleBook}
Arnaud Beauville, \emph{Complex algebraic surfaces.}, 2nd ed. ed., Lond. Math.
  Soc. Stud. Texts, vol.~34, Cambridge: Cambridge Univ. Press, 1996.

\bibitem{BHV}
Bachir Bekka, Pierre de~la Harpe, and Alain Valette, \emph{Kazhdan's property},
  New Math. Monogr., vol.~11, Cambridge: Cambridge University Press, 2008.

\bibitem{BieriEckmann}
Robert Bieri and Beno Eckmann, \emph{Groups with homological duality
  generalizing {Poincar{\'e}} duality}, Invent. Math. \textbf{20} (1973),
  103--124.

\bibitem{BlancAlgGroup}
J{\'e}r{\'e}my Blanc, \emph{Algebraic subgroups of the {Cremona} group},
  Transform. Groups \textbf{14} (2009), no.~2, 249--285.

\bibitem{BlancCantat}
J{\'e}r{\'e}my Blanc and Serge Cantat, \emph{Dynamical degrees of birational
  transformations of projective surfaces}, J. Am. Math. Soc. \textbf{29}
  (2016), no.~2, 415--471.

\bibitem{BorelHarishChandra}
Armand Borel and Harish-Chandra, \emph{Arithmetic subgroups of algebraic
  groups}, Ann. Math. (2) \textbf{75} (1962), 485--535.

\bibitem{BorelSerre}
Armand Borel and Jean-Pierre Serre, \emph{Corners and arithmetic groups
  ({Appendice}: {Arrondissement} des varietes a coins par {A}. {Douady} et {L}.
  {Herault})}, Comment. Math. Helv. \textbf{48} (1973), 436--491.

\bibitem{BorelTits}
Armand Borel and Jacques Tits, \emph{{\'E}l{\'e}ments unipotents et
  sous-groupes paraboliques de groupes r{\'e}ductifs. {I}}, Invent. Math.
  \textbf{12} (1971), 95--104.

\bibitem{BFH2}
Aaron Brown, David Fisher, and Sebastian Hurtado, \emph{Zimmer's conjecture for
  actions of {{\(\mathrm{SL}(m,\mathbb{Z})\)}}}, Invent. Math. \textbf{221}
  (2020), no.~3, 1001--1060.

\bibitem{BFH1}
\bysame, \emph{Zimmer's conjecture: subexponential growth, measure rigidity,
  and strong property ({T})}, Ann. Math. (2) \textbf{196} (2022), no.~3,
  891--940.

\bibitem{BrownBook}
Kenneth~S. Brown, \emph{Cohomology of groups}, Grad. Texts Math., vol.~87,
  Springer, 1982.

\bibitem{BrownBuildings}
\bysame, \emph{Buildings}, New York etc.: Springer-Verlag, 1989.

\bibitem{BurgerIozziMonod}
Marc Burger, Alessandra Iozzi, and Nicolas Monod, \emph{Equivariant embeddings
  of trees into hyperbolic spaces}, Int. Math. Res. Not. \textbf{2005} (2005),
  no.~22, 1331--1369.

\bibitem{CantatZimmer}
Serge Cantat, \emph{A {K{\"a}hlerian} version of a conjecture of {Robert} {J}.
  {Zimmer}}, Ann. Sci. {\'E}c. Norm. Sup{\'e}r. (4) \textbf{37} (2004), no.~5,
  759--768.

\bibitem{CantatTits}
\bysame, \emph{On the groups of birational transformations of surfaces}, Ann.
  Math. (2) \textbf{174} (2011), no.~1, 299--340.

\bibitem{CantatMilnor}
\bysame, \emph{Dynamics of automorphisms of compact complex surfaces},
  Frontiers in complex dynamics. In celebration of John Milnor's 80th
  birthday., Princeton, NJ: Princeton University Press, 2014, pp.~463--514.

\bibitem{CantatCornulier}
Serge Cantat and Yves de~Cornulier, \emph{Commensurating actions of birational
  groups and groups of pseudo-automorphisms}, J. {\'E}c. Polytech., Math.
  \textbf{6} (2019), 767--809.

\bibitem{CantatXie}
Serge Cantat and Junyi Xie, \emph{Algebraic actions of discrete groups: the
  {{\(p\)}}-adic method}, Acta Math. \textbf{220} (2018), no.~2, 239--295.

\bibitem{CantatZeghib}
Serge Cantat and Abdelghani Zeghib, \emph{Holomorphic actions, {Kummer}
  examples, and {Zimmer} program}, Ann. Sci. {\'E}c. Norm. Sup{\'e}r. (4)
  \textbf{45} (2012), no.~3, 447--489.

\bibitem{CapraceLytchak}
Pierre-Emmanuel Caprace and Alexander Lytchak, \emph{At infinity of
  finite-dimensional {CAT}{{\((0)\)}} spaces}, Math. Ann. \textbf{346} (2010),
  no.~1, 1--21.

\bibitem{CornulierLattices}
Yves Cornulier, \emph{Irreducible lattices, invariant means, and commensurating
  actions}, Mathematische Zeitschrift \textbf{279} (2015), no.~1--2, 1--26.

\bibitem{CornulierFW}
\bysame, \emph{Group actions with commensurated subsets, wallings and cubings},
  2016, arXiv:1302.5982v2.

\bibitem{CornulierCremona}
\bysame, \emph{Regularization of birational actions of {FW} groups},
  Confluentes Math. \textbf{12} (2020), no.~2, 3--10.

\bibitem{DSVX}
Danijela Damjanovic, Ralf Spatzier, Kurt Vinhage, and Disheng Xu, \emph{The
  {Zimmer} {Program} for partially hyperbolic actions}, Acta Math. (to appear).

\bibitem{DelzantPy12}
Thomas Delzant and Pierre Py, \emph{K{\"a}hler groups, real hyperbolic spaces
  and the {Cremona} group. {With} an appendix by {Serge} {Cantat}}, Compos.
  Math. \textbf{148} (2012), no.~1, 153--184.

\bibitem{Deserti}
Julie D{\'e}serti, \emph{Cremona group and complex dynamics: an approach to the
  {Zimmer} conjecture}, Int. Math. Res. Not. \textbf{2006} (2006), no.~11, 27.

\bibitem{Gerasimov}
V.~N. Gerasimov, \emph{Fixed-point-free actions on cubings}, Sib. Adv. Math.
  \textbf{8} (1997), no.~1, 91--109.

\bibitem{GhysCircle}
{\'E}tienne Ghys, \emph{Lattice actions on the circle}, Invent. Math.
  \textbf{137} (1999), no.~1, 199--231.

\bibitem{Gizatullin}
M.~Kh. Gizatullin, \emph{Rational {G}-surfaces}, Izv. Akad. Nauk SSSR, Ser.
  Mat. \textbf{44} (1980), 110--144.

\bibitem{LPU}
Anne Lonjou, Piotr Przytycki, and Christian Urech, \emph{Finitely generated
  subgroups of algebraic elements of plane {Cremona} groups are bounded}, J.
  {\'E}c. Polytech., Math. \textbf{11} (2024), 1011--1028.

\bibitem{LonjouUrech}
Anne Lonjou and Christian Urech, \emph{Actions of {Cremona} groups on
  {CAT}{{\((0)\)}} cube complexes}, Duke Math. J. \textbf{170} (2021), no.~17,
  3703--3743.

\bibitem{MargulisBook}
G.~A. Margulis, \emph{Discrete subgroups of semisimple {Lie} groups}, Ergeb.
  Math. Grenzgeb., 3. Folge, vol.~17, Berlin etc.: Springer-Verlag, 1991.

\bibitem{Maruyama}
Masaki Maruyama, \emph{On automorphism groups of ruled surfaces}, J. Math.
  Kyoto Univ. \textbf{11} (1971), 89--112.

\bibitem{MonodRigidity}
Nicolas Monod, \emph{Superrigidity for irreducible lattices and geometric
  splitting}, J. Am. Math. Soc. \textbf{19} (2006), no.~4, 781--814.

\bibitem{MonodPyhyperbolic}
Nicolas Monod and Pierre Py, \emph{An exotic deformation of the hyperbolic
  space}, Am. J. Math. \textbf{136} (2014), no.~5, 1249--1299.

\bibitem{PlatonovRapinchuk}
Vladimir Platonov and Andrei Rapinchuk, \emph{Algebraic groups and number
  theory. {Transl}. from the {Russian} by {Rachel} {Rowen}}, Pure Appl. Math.,
  Academic Press, vol. 139, Boston, MA: Academic Press, 1994.

\bibitem{PrasadRankone}
Gopal Prasad, \emph{Strong rigidity of {Q}-rank 1 lattices}, Invent. Math.
  \textbf{21} (1973), 255--286.

\bibitem{Raghunathan}
M.~S. Raghunathan, \emph{Discrete subgroups of {Lie} groups}, Ergeb. Math.
  Grenzgeb., vol.~68, Springer-Verlag, Berlin, 1972.

\bibitem{Reeder}
Mark Reeder, \emph{The {Steinberg} module and the cohomology of arithmetic
  groups}, J. Algebra \textbf{141} (1991), no.~2, 287--315.

\bibitem{Stolowicz}
Gonzalo Emiliano~Ruiz Stolowicz, \emph{{Real hyperbolic representations of
  $\text{PU}(1,n)$}}, Bulletin of the Belgian Mathematical Society - Simon
  Stevin \textbf{33} (2026), no.~2, 145 -- 165.

\bibitem{Weibel}
Charles~A. Weibel, \emph{An introduction to homological algebra}, Camb. Stud.
  Adv. Math., vol.~38, Cambridge: Cambridge University Press, 1994.

\bibitem{ZimmerProgramICM}
Robert~J. Zimmer, \emph{Actions of semisimple groups and discrete subgroups},
  Proc. {Int}. {Congr}. {Math}., {Berkeley}/{Calif}. 1986, {Vol}. 2, 1247-1258,
  1987.

\end{thebibliography}
\end{document}